\pdfoutput=1
\RequirePackage{ifpdf}
\ifpdf 
\documentclass[pdftex]{sigma}
\else
\documentclass{sigma}
\fi

\usepackage{diagbox}

\numberwithin{equation}{section}

\newtheorem{Theorem}{Theorem}[section]
\newtheorem{Corollary}[Theorem]{Corollary}
\newtheorem{Lemma}[Theorem]{Lemma}
\newtheorem{Proposition}[Theorem]{Proposition}
\newtheorem{Conjecture}[Theorem]{Conjecture}
 { \theoremstyle{definition}
\newtheorem{Definition}[Theorem]{Definition}
\newtheorem{Remark}[Theorem]{Remark} }

\begin{document}
\allowdisplaybreaks

\newcommand{\arXivNumber}{1907.13417}

\renewcommand{\PaperNumber}{107}

\FirstPageHeading

\ShortArticleName{Quasi-Invariants in Characteristic $p$ and Twisted Quasi-Invariants}

\ArticleName{Quasi-Invariants in Characteristic $\boldsymbol{p}$\\ and Twisted Quasi-Invariants}

\Author{Michael REN~$^\dag$ and Xiaomeng XU~$^\ddag$}

\AuthorNameForHeading{M.~Ren and X.~Xu}

\Address{$^\dag$~Department of Mathematics, Massachusetts Institute of Technology,\\
\hphantom{$^\dag$}~Cambridge, MA 02139, USA}
\EmailD{\href{mailto:mren36@mit.edu}{mren36@mit.edu}}

\Address{$^\ddag$~School of Mathematical Sciences and Beijing International Center for Mathematical Research,\\
\hphantom{$^\ddag$}~Peking University, Beijing 100871, China}
\EmailD{\href{mailto:xxu@bicmr.pku.edu.cn}{xxu@bicmr.pku.edu.cn}}

\ArticleDates{Received July 10, 2020, in final form October 17, 2020; Published online October 27, 2020}

\Abstract{The spaces of quasi-invariant polynomials were introduced by Chalykh and Veselov [\textit{Comm. Math. Phys.} \textbf{126} (1990), 597--611]. Their Hilbert series over fields of characteristic~0 were computed by Feigin and Veselov [\textit{Int. Math. Res. Not.} \textbf{2002} (2002), 521--545]. In this paper, we show some partial results and make two conjectures on the Hilbert series of these spaces over fields of positive characteristic. On the other hand, Braverman, Etingof and Finkelberg~[arXiv:1611.10216] introduced the spaces of quasi-invariant polynomials twisted by a monomial. We extend some of their results to the spaces twisted by a~smooth function.}

\Keywords{quasi-invariant polynomials; twisted quasi-invariants}

\Classification{81R12; 20C08}

\section{Introduction}

A polynomial in variables $x_1,\dots,x_n$ is \emph{symmetric} if permuting the variables does not change it. Another way to view symmetric polynomials is as invariant polynomials under the action of the symmetric group. A natural generalization of symmetric polynomials then arises: if $s_{i,j}$ is the operator on polynomials that swaps the variables~$x_i$ and~$x_j$, then we may consider polynomials~$F$ such that $F-s_{i,j}(F)$ vanishes to some order at $x_i=x_j$. Notably, if~$F$ is symmetric in~$x_i$ and $x_j$, then $F-s_{i,j}(F)$ vanishes to infinite order. These polynomials may be viewed as quasi-invariant polynomials of the symmetric group, and have been introduced by Chalykh and Veselov \cite{CV} in the study of quantum Calogero--Moser systems.

\begin{Definition}Let $k$ be a field, $n$ be a positive integer, and $m$ be a nonnegative integer. We say that a polynomial $F\in k[x_1,\dots,x_n]$ is $m$-quasi-invariant if \[(x_i-x_j)^{2m+1}\,|\, F(x_1,\dots,x_i,\dots,x_j,\dots,x_n)-F(x_1,\dots,x_j,\dots,x_i,\dots,x_n)\] for all $1\le i,j\le n$. Denote by $Q_m(n)$ the set of all $m$-quasi-invariant polynomials over $k$ in $n$ variables.
\end{Definition}

Here, we use the odd exponent $2m+1$ because if the right-hand side is divisible by $(x_i-x_j)^{2m}$, then it is also divisible by $(x_i-x_j)^{2m+1}$. This follows by the anti-symmetry of the right-hand side in $x_i$ and $x_j$. Note that $Q_m(n)$ is a module over the ring of symmetric polynomials over $k$ in $n$ variables. Also, as $Q_m(n)$ is a space of polynomials, it has a grading by degree. Thus, we may define a Hilbert series and a Hilbert polynomial to encapsulate the structure of $Q_m(n)$.

\looseness=-1 The motivation for studying quasiivariant polynomials arises from their relation with integrable systems. In 1971, Calogero first solved the problem in mathematical physics of determining the energy spectrum of a one-dimensional system of quantum-mechanical particles with inversely quadratic potentials~\cite{calogero1971solution}. Moser later on connected the classical variant of his problem with integrable Hamiltonian systems and showed that the classical analogue is indeed integrable~\cite{moser1975three}. These so-called Calogero--Moser systems have been of great interest to mathematicians as they connect many different fields including algebraic geometry, representation theory, deformation theory, homological algebra, and Poisson geometry. See, e.g.,~\cite{etingof2006lectures} and the references therein.

Quasi-invariant polynomials are deeply related with solutions of quantum Calogero--Moser systems as well as representations of Cherednik algebras \cite{feigin2002quasi}. As such, the structure of $Q_m(n)$, in particular freeness as a module, and its corresponding Hilbert series and polynomials have been extensively investigated by mathematicians. Introduced by Feigin and Veselov in 2001, their Hilbert series and lowest degree non-symmetric elements have subsequently been computed by Felder and Veselov \cite{felder2003action}. In 2010, Berest and Chalykh generalized the idea to quasi-invariant polynomials over an arbitrary complex reflection group \cite{berest2011quasi}. Recently in 2016, Braverman, Etingof, and Finkelberg proved freeness results and computed the Hilbert series of a generalization of~$Q_m(n)$ twisted by monomial factors \cite{braverman2016cyclotomic}. Our goal is to extend the investigation of~$Q_m(n)$ and its various generalizations.

In Section~\ref{sec:p}, we investigate quasi-invariant polynomials over finite fields. In particular, we provide sufficient conditions for which the Hilbert series over characteristic $p$ is greater than over characteristic~0. We conjecture that our sufficient conditions are also necessary. We also make conjectures about the properties of the Hilbert series over finite fields.

In Section~\ref{sec:twist}, we investigate a generalization of the twisted quasi-invariants. In~\cite{braverman2016cyclotomic}, Braverman, Etingof and Finkelberg introduced the space of quasi-invariants twisted by monomial factors, again a module over the ring of symmetric polynomials. They proved freeness results and computed the corresponding Hilbert series.
We generalize their work to the space of quasi-invariants twisted by arbitrary smooth functions and determine the Hilbert series in certain cases when there are two variables.

In Section~\ref{sec:fut}, we discuss future directions for our research, in particular considering spaces of polynomial differential operators and $q$-deformations.

\section[Quasi-invariant polynomials over fields of nonzero characteristic]{Quasi-invariant polynomials over fields\\ of nonzero characteristic}\label{sec:p}

Much of the previous research on quasi-invariant polynomials has been done over fields of characteristic zero. The general approach is to use representations of spherical rational Cherednik algebras \cite{braverman2016cyclotomic}. In the case of fields of positive characteristic, we take a different approach.

Let $k$ be $\mathbb F_p$, and $Q_m(n)$ the set of all $m$-quasi-invariant polynomials over $k$ in $n$ variables. To begin, we define the Hilbert series of $Q_m(n)$.

\begin{Definition}
Let the Hilbert series of $Q_m(n)$ be \[H_m(t)=\sum_{d\ge0}t^d\cdot\dim Q_{m,d}(n),\] where $Q_{m,d}(n)$ is the $k$ vector subspace of $Q_m(n)$ consisting of polynomials with degree $d$.
\end{Definition}

By the Hilbert basis theorem, $Q_m(n)$ is a finitely generated module over the ring of symmetric polynomials. Thus, we may write \[H_m(t)=\frac{G_m(t)}{\prod\limits_{i=1}^n(1-t^i)},\] where $G_m(t)$ is the Hilbert polynomial associated with $H_m(t)$ and the terms in the denominator correspond to the elementary symmetric polynomials that generate the ring of symmetric polynomials in $n$ variables.

We are mainly concerned with the difference between the Hilbert series of $Q_m(n)$ over characteristic $p$ and characteristic 0. The following proposition states that the Hilbert series of $Q_m(n)$ is at least as large in the former case as in the latter case.

\begin{Proposition}\label{prop:ge}
$\dim Q_{m,d}(n)$ over $\mathbb F_p$ is at least as large as over $\mathbb C$ for each choice of $m$, $n$, and $d$.
\end{Proposition}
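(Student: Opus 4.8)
The plan is to realize $Q_{m,d}(n)$ over $\mathbb F_p$ as the reduction mod $p$ of an integral lattice and compare ranks. First I would observe that the defining divisibility conditions of $m$-quasi-invariance are \emph{linear} conditions on the coefficients of $F$ with integer coefficients: fixing the degree $d$, the space of homogeneous polynomials of degree $d$ in $n$ variables has a monomial basis indexed by the same set over any ring, and for each pair $i<j$ the requirement that $(x_i-x_j)^{2m+1}$ divide $F-s_{i,j}(F)$ amounts to the vanishing of finitely many $\mathbb Z$-linear combinations of the coefficients (e.g.\ the first $2m+1$ Taylor coefficients in $x_i-x_j$, along $x_i=x_j$). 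Stacking these over all pairs gives a single matrix $M_d$ with entries in $\mathbb Z$ whose kernel, computed over a field $K$, is exactly $Q_{m,d}(n)\otimes K$ — provided the reduction of $M_d$ mod $p$ still encodes the divisibility conditions correctly, which it does because divisibility of one polynomial by $(x_i-x_j)^{2m+1}$ in $K[x_1,\dots,x_n]$ is detected by the vanishing of those same Taylor coefficients over $K$ (here one must be slightly careful that $(x_i-x_j)^{2m+1}$ is monic in, say, $x_i$, so polynomial division works over any commutative ring).

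The key step is then the semicontinuity of the corank (nullity) of an integer matrix under reduction: for any matrix $M_d \in \mathbb Z^{a\times b}$, the rank of $M_d$ over $\mathbb F_p$ is at most its rank over $\mathbb Q$ (a nonzero minor over $\mathbb Q$ might become zero mod $p$, but a minor that is zero over $\mathbb Q$ stays zero mod $p$). Hence $\dim_{\mathbb F_p}\ker(M_d) = b - \operatorname{rank}_{\mathbb F_p} M_d \ge b - \operatorname{rank}_{\mathbb Q} M_d = \dim_{\mathbb Q}\ker(M_d)$. Finally, since $Q_{m,d}(n)$ over $\mathbb C$ is defined over $\mathbb Q$ (the conditions have rational coefficients), $\dim_{\mathbb C} Q_{m,d}(n) = \dim_{\mathbb Q} Q_{m,d}(n) = \dim_{\mathbb Q}\ker(M_d)$, and we conclude $\dim_{\mathbb F_p} Q_{m,d}(n) \ge \dim_{\mathbb C} Q_{m,d}(n)$.

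I expect the main obstacle to be purely bookkeeping rather than conceptual: making precise, uniformly in the coefficient ring, that ``$(x_i-x_j)^{2m+1}$ divides $g$'' is equivalent to the vanishing of a fixed finite list of $\mathbb Z$-linear forms in the coefficients of $g$, and that this list is the \emph{same} list over $\mathbb Q$ and over $\mathbb F_p$. The cleanest route is to fix the pair $(i,j)$, change coordinates to $u = x_i - x_j$ and $v = x_i + x_j$ (or just treat everything as a polynomial in $x_i$ with coefficients in the remaining variables), perform Euclidean division by the monic polynomial $(x_i - x_j)^{2m+1}$ — valid over any commutative ring since the divisor is monic — and note that the remainder's coefficients are fixed $\mathbb Z$-linear combinations of the coefficients of $g = F - s_{i,j}(F)$, independent of the ground ring. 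Once this is set up, the comparison of kernel dimensions is immediate from elementary linear algebra over $\mathbb Z$.
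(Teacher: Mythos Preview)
Your proposal is correct and follows essentially the same approach as the paper: both observe that $m$-quasi-invariance in degree $d$ amounts to the kernel of a fixed integer matrix and then invoke semicontinuity of rank under reduction mod $p$. The paper's version introduces auxiliary unknowns for the quotient $(F-s_{i,j}F)/(x_i-x_j)^{2m+1}$ rather than eliminating them via Euclidean division as you do, but this is a cosmetic difference; your treatment is in fact slightly more careful about why the \emph{same} linear system governs both characteristics (and your remark that one should divide by the monic-in-$x_i$ polynomial rather than change to $u=x_i-x_j$, $v=x_i+x_j$ is the right caution for $p=2$).
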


\begin{proof}Suppose that $F=\sum_{i_1+\cdots+i_n=d}a_{i_1,\dots,i_n}x_1^{i_1}\cdots x_n^{i_n}$ is in $Q_{m,d}(n)$. Then, either $d<2m+1$, in which case we must have $F$ symmetric or $(x_i-x_j)^{2m+1}$ would divide a nonzero polynomial with degree $d$ for some choice of $i$ and $j$, a contradiction. This means that the dimensions are equal over either characteristic. Otherwise, we have that \[F-s_{i,j}F=(x_i-x_j)^{2m+1}\left(\sum_{j_1+\dots+j_n=d-(2m+1)}b_{i,j,j_1,\dots,j_n}x_1^{j_1}\cdots x_n^{j_n}\right)\] for each pair $i$, $j$. These yield a system of linear equations in the undetermined coefficients of~$F$ and $\frac{F-s_{i,j}F}{(x_i-x_j)^{2m+1}}$, which is with integral coefficients we are considering. It then follows from considering the null-space that the dimension of the solution space over a field of characteristic~$p$ is at least the dimension over a field of characteristic~0.
\end{proof}

However, for each $m$, there are only finitely many primes for which the Hilbert series of~$Q_m(n)$ is strictly greater over ${\mathbb F}_p$ than over $\mathbb C$.

\begin{Proposition}
For any fixed $m$ and $n$, there are only finitely many primes $p$ for which the Hilbert series of $Q_m(n)$ is greater over $\mathbb F_p$ than over $\mathbb C$.
\end{Proposition}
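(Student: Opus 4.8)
The plan is to realize the comparison over $\mathbb Z$ and to pin the possible failure of dimension-preservation to a single finitely generated torsion module. Write $A=\mathbb Z[x_1,\dots,x_n]$ and $R=A^{S_n}=\mathbb Z[e_1,\dots,e_n]$; recall that $A$ is a finite free $R$-module. I would work with the graded $R$-linear map
\[
\Phi\colon A\longrightarrow T:=\bigoplus_{1\le i<j\le n}A\big/(x_i-x_j)^{2m+1}A,\qquad
F\longmapsto\bigl(F-s_{i,j}F \bmod (x_i-x_j)^{2m+1}\bigr)_{i<j},
\]
whose kernel $Q_m(n)_{\mathbb Z}:=\ker\Phi$ is the natural $\mathbb Z$-form of the $m$-quasi-invariants. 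The first point to check is that this construction commutes with reduction modulo a prime: since every $(x_i-x_j)^{2m+1}$ is a primitive polynomial, the subgroup $(x_i-x_j)^{2m+1}A_{d-2m-1}$ is saturated in $A_d$, so $T$ is torsion-free over $\mathbb Z$ and $\Phi\otimes_{\mathbb Z}k$ is, for every field $k$, exactly the quasi-invariance map over $k$; in particular $Q_m(n)_k=\ker(\Phi\otimes_{\mathbb Z}k)$.

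The key input is a finiteness statement. Since $A$ is finite over the Noetherian ring $R$, so is $T$, hence the graded $R$-module $C:=\operatorname{coker}\Phi$ is finitely generated. Its $\mathbb Z$-torsion submodule $C_{\mathrm{tors}}$ is therefore finitely generated over $R$; choosing finitely many $R$-module generators of it, each has finite additive order, and the least common multiple $N$ of these orders annihilates all of $C_{\mathrm{tors}}$. Consequently $C$ has no nonzero $p$-torsion for any prime $p\nmid N$. This ``bounded denominators'' step is the conceptual heart of the argument; everything else is homological bookkeeping.

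To finish, I would fix a prime $p\nmid N$ and base change. Being a submodule of the torsion-free $T$, the module $\operatorname{im}\Phi$ is torsion-free, hence $\mathbb Z$-flat, so $0\to Q_m(n)_{\mathbb Z}\to A\xrightarrow{\Phi}\operatorname{im}\Phi\to 0$ stays exact after $-\otimes_{\mathbb Z}\mathbb F_p$, and each piece $Q_m(n)_{\mathbb Z,d}$, being saturated in the free group $A_d$, is free of rank $\dim_{\mathbb C}Q_{m,d}(n)$. From $0\to\operatorname{im}\Phi\to T\to C\to 0$ and $\operatorname{Tor}_1^{\mathbb Z}(C,\mathbb F_p)=C[p]=0$ one obtains an injection $\operatorname{im}\Phi\otimes\mathbb F_p\hookrightarrow T\otimes\mathbb F_p$, whence
\[
Q_m(n)_{\mathbb F_p}=\ker\bigl(A\otimes\mathbb F_p\to T\otimes\mathbb F_p\bigr)=\ker\bigl(A\otimes\mathbb F_p\to\operatorname{im}\Phi\otimes\mathbb F_p\bigr)=Q_m(n)_{\mathbb Z}\otimes\mathbb F_p .
\]
Reading off degree-$d$ dimensions gives $\dim Q_{m,d}(n)$ over $\mathbb F_p$ equal to $\operatorname{rank}_{\mathbb Z}Q_m(n)_{\mathbb Z,d}=\dim Q_{m,d}(n)$ over $\mathbb C$, so for every $p\nmid N$ the two Hilbert series coincide; by Proposition~\ref{prop:ge} the Hilbert series over $\mathbb F_p$ can be strictly larger only for the finitely many primes dividing $N$.

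Equivalently, and closer to the proof of Proposition~\ref{prop:ge}: in each degree $d$ the quasi-invariance conditions are an integer matrix $\Phi_d$ with elementary divisors $s_1^{(d)}\mid\dots\mid s_{r_d}^{(d)}$, and the excess of $\dim Q_{m,d}(n)$ over $\mathbb F_p$ relative to over $\mathbb C$ equals $\#\{\,i:p\mid s_i^{(d)}\,\}$; the bad primes in degree $d$ are those dividing $s_{r_d}^{(d)}$, i.e.\ dividing the order of the torsion of $\operatorname{coker}\Phi_d$, and finiteness of the union over all $d$ is again the statement that the torsion of $\operatorname{coker}\Phi=\bigoplus_d\operatorname{coker}\Phi_d$ is killed by one integer. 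In either phrasing there is no single hard estimate; the main thing to be careful about is keeping the torsion-freeness and primitivity facts straight so that base change genuinely computes $Q_m(n)_{\mathbb F_p}$, and the one new ingredient is the Noetherian bounded-torsion observation of the second paragraph.
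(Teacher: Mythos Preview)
Your proof is correct and follows the same architecture as the paper's: set up the integral quasi-invariance map $\Phi$ into $T=\bigoplus_{i<j} A/(x_i-x_j)^{2m+1}A$, observe that any excess of $\dim Q_{m,d}(n)$ over $\mathbb F_p$ is detected by $p$-torsion in $C=\operatorname{coker}\Phi$, and finish by bounding that torsion uniformly in $p$. The difference is in how the finiteness step is obtained. The paper invokes the generic freeness lemma (Eisenbud, Theorem~14.4) with $A=\mathbb Z$, $B=R=\mathbb Z[x_1,\dots,x_n]^{S_n}$, $M=C$ to produce a nonzero $r\in\mathbb Z$ with $C_r$ free over $\mathbb Z[1/r]$, whence $C$ has no $p$-torsion for $p\nmid r$. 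You instead argue directly: $R$ is Noetherian and $C$ is finitely generated over it, so the submodule $C_{\mathrm{tors}}$ is finitely generated over $R$, and the lcm of the additive orders of a finite $R$-generating set annihilates all of $C_{\mathrm{tors}}$. This extracts exactly the consequence of generic freeness actually needed here (bounded $\mathbb Z$-torsion, not freeness) with no outside machinery, so your route is strictly more elementary. You also carry out the base-change verification (primitivity of $(x_i-x_j)^{2m+1}$ giving torsion-freeness of $T$, flatness of $\operatorname{im}\Phi$, vanishing of $\operatorname{Tor}_1^{\mathbb Z}(C,\mathbb F_p)$) that the paper only states in one sentence; the elementary-divisor reformulation at the end is a helpful complement.
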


\begin{proof}
Let $P=\mathbb Z[x_1,\dots,x_n]$, $Q=\bigoplus_{1\le i<j\le n}P/(x_i-x_j)^{2m+1}P$, and $h$ be the linear map from~$P$ to~$Q$ defined as
\[ h(F)=\bigoplus_{1\le i<j\le n}(1-s_{i,j})F.\]  Note that $\operatorname{Ker}(h)$ coincides with $Q_m(n)$ by definition. Set $M=\operatorname{Coker}(h)$ as the cokernel of~$h$ in~$Q$ and note that if $Q_m(n)$ over $\mathbb F_p$ has a higher dimension than $Q_m(n)$ over $\mathbb C$ for some degree of the polynomials, then $M$ must have $p$-torsion. To prove that there are only finitely many such primes~$p$, we use the following generic freeness lemma, see, e.g., \cite[Theorem~14.4]{Eisenbud}.

\begin{Lemma} For a Noetherian integral domain $A$, a~fini\-te\-ly generated $A$-algebra $B$, and a~fini\-te\-ly generated $B$-module~$M$, there exists a nonzero element~$r$ of~$A$ such that the localization~$M_r$ is a~free $A_r$ module.
\end{Lemma}

We apply this in the case where $A=\mathbb Z$, $B=\mathbb Z[x_1,\dots,x_n]^{S_n}$ and $M=\operatorname{Coker}(h)$. It is easy to see that these satisfy the conditions for $A$, $B$, and $M$ in the lemma. Thus there exists an integer $r\in\mathbb Z\setminus\{0\}$ such that $M_r$ is free over $\mathbb Z[1/r]$. As~$M$ has no $p$-torsion for any $p\nmid r$, $M$~has no $p$-torsion for all but finitely many primes~$p$ so the Hilbert series over~$\mathbb F_p$ is the same as over~$\mathbb C$.
\end{proof}

We now determine the primes for which $Q_m(n)$ is greater. First, we examine the case when $n=2$.

\begin{Proposition}When $n=2$, the Hilbert series for $Q_m(2)$ over characteristic~$p$ coincides with that of characteristic~$0$. It is $\frac{1+t^{2m+1}}{(1-t)(1-t^2)}$ over all fields.
\end{Proposition}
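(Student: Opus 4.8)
The plan is to compute $Q_m(2)$ explicitly as a module over $R:=k[x_1,x_2]^{S_2}=k[e_1,e_2]$ and to see that the answer does not depend on $k$. Since $R$ has Hilbert series $\frac{1}{(1-t)(1-t^2)}$, it suffices to show that over every field $k$ the module $Q_m(2)$ is free over $R$ of rank two, with homogeneous generators in degrees $0$ and $2m+1$; the claimed formula $H_m(t)=\frac{1+t^{2m+1}}{(1-t)(1-t^2)}$, and in particular its coincidence in characteristics $0$ and $p$, follows at once.

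First I would use that $k[x_1,x_2]$ is a free $R$-module with basis $\{1,x_1\}$, which holds over any field (including characteristic $2$): spanning follows from the relation $x_1^2=e_1x_1-e_2$, and $R$-linear independence follows because $k[x_1,x_2]$ is a domain and $x_1-x_2\neq 0$. Writing an arbitrary $F\in k[x_1,x_2]$ uniquely as $F=P+Qx_1$ with $P,Q\in R$, one computes $F-s_{1,2}F=Q\,(x_1-x_2)$. Hence $F$ is $m$-quasi-invariant exactly when $(x_1-x_2)^{2m+1}\mid Q\,(x_1-x_2)$ in $k[x_1,x_2]$, i.e., cancelling one factor in the domain $k[x_1,x_2]$, when $(x_1-x_2)^{2m}\mid Q$.

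Next I would push this divisibility down to $R$. Both $Q$ and $(x_1-x_2)^{2m}=(e_1^2-4e_2)^m=:\Delta^m$ lie in $R$, and if $(x_1-x_2)^{2m}$ divides $Q$ in $k[x_1,x_2]$ then the quotient $Q/\Delta^m$ is $s_{1,2}$-invariant, hence already lies in $R$. Therefore $F\in Q_m(2)$ if and only if $F=P+\Delta^m R'\,x_1$ for some $P,R'\in R$, so $Q_m(2)=R\cdot 1\ \oplus\ R\cdot(\Delta^m x_1)$ as $R$-modules; the second generator $\Delta^m x_1=(x_1-x_2)^{2m}x_1$ is homogeneous of degree $2m+1$ and indeed lies in $Q_m(2)$, since $(\Delta^m x_1)-s_{1,2}(\Delta^m x_1)=(x_1-x_2)^{2m+1}$. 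This yields $H_m(t)=\frac{1+t^{2m+1}}{(1-t)(1-t^2)}$ over every field, consistent with Proposition~\ref{prop:ge}.

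The computation is otherwise routine, but one point needs care: in characteristic $2$ one cannot decompose $F$ into its symmetric and anti-symmetric parts via $\frac{1}{2}(F\pm s_{1,2}F)$, the device that makes the characteristic-$0$ argument transparent. Working instead with the $R$-basis $\{1,x_1\}$ of $k[x_1,x_2]$ avoids division by $2$ and handles all characteristics uniformly; in characteristic $2$ one simply has $\Delta^m=(e_1^2-4e_2)^m=e_1^{2m}$, so the nonconstant generator is $e_1^{2m}x_1$.
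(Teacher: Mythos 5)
Your argument is correct, and it takes a genuinely different route from the paper's. The paper proves the two dimensions coincide by a lifting argument: it invokes Proposition~\ref{prop:ge} for one inequality, and for the other it lifts an $\mathbb F_p$-basis of $Q_{m,d}(2)$ to integral polynomials $F_i\equiv f_i\pmod p$ by an explicit manipulation of coefficients of $G(x,y)(x-y)^{2m+1}$, then cites the known characteristic-zero formula from~\cite{braverman2016cyclotomic}. You instead compute the $R$-module structure of $Q_m(2)$ directly and uniformly over any field: using the $R$-basis $\{1,x_1\}$ of $k[x_1,x_2]$, you reduce the quasi-invariance condition to $(x_1-x_2)^{2m}\mid Q$ in the coordinate $F=P+Qx_1$, observe that $\Delta^m=(e_1^2-4e_2)^m$ and $Q$ both lie in $R$ so the quotient does too, and conclude $Q_m(2)=R\oplus R\cdot(\Delta^m x_1)$ is free with generators in degrees $0$ and $2m+1$. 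This is more self-contained (you do not need Proposition~\ref{prop:ge} nor the external citation, and you actually establish freeness, not merely the dimension count) and it handles characteristic $2$ by design rather than by a separate check. The trade-off is that the paper's lifting technique illustrates a method for comparing Hilbert series across characteristics that can be attempted for $n>2$, whereas your explicit module computation relies on $k[x_1,x_2]$ being free of rank $2$ over $R$ with a transparent basis, which is special to $n=2$.
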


\begin{proof}
We claim that the dimension of $Q_{m,d}(2)$ over $\mathbb C$ is equal to the dimension of~$Q_{m,d}(2)$ over~$\mathbb F_p$. By Proposition~\ref{prop:ge}, it suffices to show that for each $m$ and $d$, the dimension of~$Q_{m,d}(2)$ over $\mathbb C$ is at least the dimension of~$Q_{m,d}(2)$ over~$\mathbb F_p$. Consider a basis $f_1,\dots,f_k\in\mathbb F_p[x,y]$ of~$Q_{m,d}(2)$ over~$\mathbb F_p$. We will show the existence of $F_1,\dots,F_k\in\mathbb Z[x,y]$ of $Q_{m,d}(2)$ such that $F_i\equiv f_i\pmod p$ for all~$i$. This means that $F_1,\dots,F_k$ are linearly independent, as otherwise there exist relatively prime integers $n_1,\dots,n_k$ with $n_1F_1+\cdots+ n_kF_k=0$. Taking the equation modulo $p$ yields $n_1f_1+\cdots+ n_kf_k\equiv0\pmod p$, a contradiction with $f_1,\dots,f_k$ forming a basis of $Q_{m,d}(2)$ as not all of $n_1,\dots,n_k$ are divisible by~$p$.

To show the existence of such $F_1,\dots,F_k$, let $f=f_i$ for a fixed $i$ and suppose that $f(x,y)-f(y,x)=(x-y)^{2m+1}g(x,y)$ for some symmetric $g(x,y)\in\mathbb F_p[x,y]$. Let us take a symmetric $G(x,y)\in \mathbb Z[x,y]$ such that $G\equiv g\pmod{p}$. Let $f(x,y)=\sum_{i=0}^da_ix^iy^{d-i}$ and suppose that $G(x,y)(x-y)^{2m+1}=\sum_{i=1}^dB_ix^iy^{d-i}$ with $a_i\in\mathbb F_p$ and $B_i\in \mathbb Z$. We have that $a_i-a_{d-i}\equiv B_i\pmod{p}$. Note that $G(x,y)$ is symmetric, so $G(x,y)(x-y)^{2m+1}$ is anti-symmetric, which implies that $B_i+B_{d-i}=0$ for all $i$. Now, define $F(x,y)=\sum_{i=1}^dA_ix^iy^{d-i}$, where $A_i\equiv a_i\pmod{p}$ for \mbox{$i\le\frac d2$} and $A_i=A_{d-i}+B_i$ for \mbox{$i>\frac d2$}. Note that for \mbox{$i>\frac d2$}, we have that $A_i\equiv A_{d-i}+B_i\equiv a_i\pmod{p}$, so this $F$ satisfies $F\equiv f\pmod p$. It remains to check the quasi-invariance condition. However, note that
\begin{gather*}
F(x,y)-F(y,x)=\sum_{i=1}^d(A_i-A_{d-i})x^iy^{d-i}=\sum_{i=1}^dB_ix^iy^{d-i}=G(x,y)(x-y)^{2m+1}
\end{gather*} by definition, so we are done.

Hence, the dimension, and thus the series, is independent of $p$. It is known from~\cite{braverman2016cyclotomic} that the series is $\frac{1+t^{2m+1}}{(1-t)(1-t^2)}$, as desired.
\end{proof}

When $n>2$, the series differs greatly for many primes. In this case, we have found a sufficient condition for when the Hilbert series over characteristic $p$ is greater.

\begin{Theorem}\label{thm:suf}
Let $m\ge0$ and $n\ge3$ be integers. Let $p$ be a prime such that there exist integers $a\ge 0$ and $k\ge0$ with \[\frac{mn(n-2)+\binom n2}{n(n-2)k+\binom n2-1}\le p^a\le\frac{mn}{nk+1}.\] Then the Hilbert series of $Q_m(n)$ with $n$ variables over $\mathbb F_p$ is different from the Hilbert series over $\mathbb C$.
\end{Theorem}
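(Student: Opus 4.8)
The plan is to exhibit an explicit $m$-quasi-invariant polynomial over $\mathbb{F}_p$ that has no analogue over $\mathbb{C}$ in its degree, thereby forcing the Hilbert series to differ. The natural place to look is the lowest-degree non-symmetric quasi-invariants. Over $\mathbb{C}$, the Feigin--Veselov theory tells us the degree of the lowest non-symmetric element of $Q_m(n)$, and its dimension in each degree; so it suffices to produce, in some degree $d$ where the characteristic-$0$ dimension is smaller, a polynomial over $\mathbb{F}_p$ that survives. Concretely, I would build a candidate as a symmetric polynomial times an antisymmetrizing factor, or more precisely take a "deformed discriminant"-type object: consider $\prod_{i<j}(x_i-x_j)$ raised to a power, multiplied by power sums, and ask when the naive $\mathbb{C}$-quasi-invariance obstruction — which lives in degree governed by the numerology $mn(n-2)+\binom n2$ versus $n(n-2)k+\binom n2-1$ on one side and $mn$ versus $nk+1$ on the other — collapses mod $p^a$.

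The two fractions in the hypothesis are engineered so that their meaning is: there is a power $q=p^a$ with $q\equiv 0$ behavior making a certain binomial coefficient $\binom{N}{M}$ vanish mod $p$ by Lucas' theorem. The first inequality $p^a \ge \frac{mn(n-2)+\binom n2}{n(n-2)k+\binom n2-1}$ rearranges to $q\bigl(n(n-2)k+\binom n2-1\bigr)\ge mn(n-2)+\binom n2$, i.e.\ $qn(n-2)k + q\binom n2 \ge mn(n-2)+\binom n2 + (q-1)$, which I would interpret as saying a "digit" of some number in base $q$ (controlling a coefficient in the expansion of $\prod(x_i-x_j)^{2m+1}$ or a related product) is forced to be strictly smaller than the corresponding digit of the exponent one is dividing into, so the relevant multinomial/binomial coefficient is $0$ in $\mathbb{F}_p$. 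The second inequality $p^a \le \frac{mn}{nk+1}$, i.e.\ $q(nk+1)\le mn$, guarantees that the would-be quotient polynomial (the $g$ such that $(1-s_{ij})F=(x_i-x_j)^{2m+1}g$) still has nonnegative degree, so the construction is not vacuous. So the key steps are: (1) write down the explicit $F\in\mathbb{F}_p[x_1,\dots,x_n]$ as a product of a power of the Vandermonde, a power-sum type symmetric factor indexed by $k$, and check its degree $d$; (2) verify $(x_i-x_j)^{2m+1}\mid (1-s_{ij})F$ in $\mathbb{F}_p[x_1,\dots,x_n]$ by reducing to the two-variable division and invoking a Lucas-type vanishing of binomial coefficients under the two displayed inequalities; (3) invoke the Feigin--Veselov formula for the $\mathbb{C}$ Hilbert series to see that $\dim Q_{m,d}(n)$ over $\mathbb{C}$ is strictly smaller — for instance because $F$ is non-symmetric but every characteristic-$0$ quasi-invariant of that degree is forced to be symmetric, or because $F$ is $\mathbb{Z}$-integrally not liftable; then Proposition~\ref{prop:ge} upgrades "$\ne$" to "$>$".

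The main obstacle I anticipate is step (2): proving the divisibility by $(x_i-x_j)^{2m+1}$ over $\mathbb{F}_p$. Because quasi-invariance is a condition one pair at a time, it reduces to a two-variable statement, but the polynomial $F$ has a genuinely multivariate shape (the Vandermonde power mixes all the variables), so restricting to the plane $x_i=x_j$ and differentiating up to order $2m$ will produce sums of products of binomial coefficients, and one must show the whole combination vanishes mod $p$ — not just a single coefficient. I expect the cleanest route is to choose $F$ so that modulo $p$ it literally equals $(\prod_{i<j}(x_i-x_j))^{2q\ell+1}\cdot(\text{symmetric})$ for a suitable $\ell$, i.e.\ the power of the Vandermonde is itself $\ge 2m+1$ after a Frobenius/Lucas collapse of exponents, making divisibility immediate and reducing everything to the inequality bookkeeping; the two hypotheses are then exactly the constraints that such an $\ell$ and such a symmetric cofactor of the right degree exist. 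The remaining steps — degree count and comparison with the known $\mathbb{C}$ Hilbert series — should be routine arithmetic with the Feigin--Veselov formula.
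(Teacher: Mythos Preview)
Your overall strategy is the right one and matches the paper's: exhibit a non-symmetric $F\in Q_m(n)$ over $\mathbb F_p$ of degree at most $mn$, then invoke the Felder--Veselov result that over $\mathbb C$ every element of $Q_m(n)$ of degree $\le mn$ is symmetric (the Hilbert polynomial begins $1+(n-1)t^{mn+1}+\cdots$). However, the mechanism you propose for producing $F$ is off in two related ways.

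First, the paper does \emph{not} rely on Lucas-type vanishing of individual binomial or multinomial coefficients. The engine is the Frobenius identity $u^{p^a}-v^{p^a}=(u-v)^{p^a}$ in $\mathbb F_p$, applied \emph{globally} to a polynomial rather than coefficient-by-coefficient. Concretely, if $P$ is any polynomial, then $(1-s_{ij})\bigl(P^{p^a}\bigr)=\bigl((1-s_{ij})P\bigr)^{p^a}$ over $\mathbb F_p$; so if $(x_i-x_j)^{2k+1}\mid(1-s_{ij})P$, one immediately gets $(x_i-x_j)^{p^a(2k+1)}\mid(1-s_{ij})\bigl(P^{p^a}\bigr)$. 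No combinatorial cancellation is needed.

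Second, and more importantly, the key object you are missing is the polynomial $P_k$: the lowest-degree non-symmetric generator of $Q_k(n)$ over $\mathbb C$ (hence over $\mathbb Z$, hence over $\mathbb F_p$), which has degree $nk+1$. The paper's candidate is
\[
F \;=\; P_k^{\,p^a}\,\prod_{i<j}(x_i-x_j)^{2b},\qquad 2b \;=\; 2m+1-p^a(2k+1).
\]
The Vandermonde factor is symmetric, so $(1-s_{ij})F=\bigl((1-s_{ij})P_k\bigr)^{p^a}\cdot\prod(x_i-x_j)^{2b}$, and the divisibility $(x_i-x_j)^{2m+1}\mid(1-s_{ij})F$ follows from the Frobenius step plus $p^a(2k+1)+2b=2m+1$. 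The first displayed inequality in the theorem is exactly what forces $\deg F=p^a(nk+1)+2b\binom n2\le mn$, and the second is what keeps $\deg P_k^{p^a}=p^a(nk+1)\le mn$. So the role of the parameter $k$ is not to index a power sum or a digit position, but to select which auxiliary quasi-invariant space $Q_k(n)$ one borrows the seed $P_k$ from.

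Your fallback idea of taking $F$ to be a high Vandermonde power times a symmetric factor cannot work by itself: such an $F$ is (anti)symmetric, and to land in $Q_m$ with the Vandermonde exponent at least $2m+1$ forces $\deg F\ge(2m+1)\binom n2\gg mn$, defeating the degree bound. The Frobenius trick applied to a genuinely non-symmetric $P_k$ is precisely what lets you get quasi-invariance of high order at low degree. Once you replace ``Lucas vanishing of a coefficient'' by ``Frobenius applied to $P_k$'', your steps (1)--(3) go through essentially as the paper does.
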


\begin{proof}
The following formula, due to \cite{felder2003action}, gives the Hilbert polynomial for $Q_m(n)$ over $\mathbb C$: \[n!t^{m\binom n2}\sum_{\text{Young diagrams}}\prod_{i=1}^nt^{m(\ell_i-a_i)+\ell_i}\frac{1-t^i}{h_i\big(1-t^{h_i}\big)}.\]
Here, the sum is over Young diagrams with $n$ boxes, $a_i$ denotes the number of boxes to the right of the $i$th box, $\ell_i$ denotes the number of boxes below the $i$th box, and $h_i=a_i+\ell_i+1$. It is not hard to see that the formula gives that the Hilbert polynomial is of the form $1+(n-1)t^{mn+1}+\cdots$, where the exponents are sorted in ascending order. This is as the two terms of smallest degree are contributed by the Young diagrams corresponding to the partitions $(n)$ and $(n-1,1)$. This implies that all polynomials in $Q_m(n)$ with degree at most $mn$ are symmetric, and~$Q_m(n)$ as a~module over symmetric polynomials has a generator of degree $mn+1$. For any $m$, denote this generator in $Q_m(n)$ by~$P_m$. In the following construction, we will use the generator $P_k$ of $Q_k(n)$ for certain $k<m$.

To show that the Hilbert series is different over $\mathbb F_p$, we consider the following non-symmetric polynomial: \[F=P_k^{p^a}\prod_{1\le i<j\le n}(x_i-x_j)^{2b}.\] Here $b=\frac{2m+1-p^a(2k+1)}2$, and $a$, $k$ are integers such that the above inequalities are satisfied. So
\begin{gather*}
\deg F=p^a(nk+1)+2b\binom n2=p^a(nk+1)+\binom n2(2m+1-p^a(2k+1))\\
\hphantom{\deg F}{} =\binom n2(2m+1)+p^a\left(1-\binom n2-n(n-2)k\right)\\
\hphantom{\deg F}{} \le\binom n2(2m+1)-\left(mn(n-2)+\binom n2\right) =mn<\deg P_m.
\end{gather*} Hence, if we show that $F\in Q_m(n)$, then as $\deg F<\deg P_m$ we obtain a different Hilbert series over $\mathbb F_p$, in particular in the coefficient of $t^{\deg F}$. To do that, note that $b$ is an integer when~$p$ is odd and a half-integer when $p=2$. Either way, $\prod(x_i-x_j)^{2b}$ is a~symmetric polynomial in~$\mathbb F_p$, so we have that $(1-s_{i,j})\big(P_k^{p^a}\prod(x_i-x_j)^{2b}\big)=((1-s_{i,j})P_k)^{p^a}\prod(x_i-x_j)^{2b}$ by the fact that $(u+v)^{p^a}=u^{p^a}+v^{p^a}$ in $\mathbb F_p$. Hence, as $(x_i-x_j)^{2k+1}$ divides $(1-s_{i,j})P_k$ by assumption, we have that $(x_i-x_j)^{p^a(2k+1)+2b}=(x_i-x_j)^{2m+1}$ divides $(1-s_{i,j})F$. Hence, $F$ is in~$Q_m(n)$, so this produces a generator of $Q_m(n)$ of lower degree in $\mathbb F_p$ and thus a different Hilbert series over~$\mathbb F_p$, as desired.
\end{proof}

\begin{Remark}Let us write the inequalities in Theorem \ref{thm:suf} in the form
\[ k+\frac{1}{n}\le m/p^a\le k+\frac{n+1}{2n}-\frac{n-1}{2(n-2)p^a}.\]
In this way, we can rewrite it as
\[ \frac{1}{n}\le \left\{\frac{m}{p^a}\right\}\le \frac{n+1}{2n}-\frac{n-1}{2(n-2)p^a},
\]
where $\{\cdots\}$ denotes fractional part, which eliminates $k$.
Also from this form it is clear that~$a$ cannot be zero, i.e., $a\ge 1$.
\end{Remark}
\begin{Remark}
Let $k=0$ in the inequality in Theorem~\ref{thm:suf}. Then, we see that all primes $p$ with a~power $p^a$ between roughly $2m$ and $mn$ satisfy the inequality. These primes $p$ satisfy the property that $Q_m(n)$ over $\mathbb F_p$ has a different Hilbert series than over $\mathbb C$.
\end{Remark}

\begin{Conjecture}\label{conj:mn}
The sufficient condition we have given in Theorem~{\rm \ref{thm:suf}} is also necessary. That is if the Hilbert series of $Q_m(n)$ in $\mathbb F_p$ is different from the Hilbert series in $\mathbb C$, then there exist integers $a\ge 0$ and $k\ge0$ such that \[\frac{mn(n-2)+\binom n2}{n(n-2)k+\binom n2-1}\le p^a\le\frac{mn}{nk+1}.\] In particular, if $p>mn$, then the Hilbert series over $\mathbb F_p$ is the same as over $\mathbb C$.
\end{Conjecture}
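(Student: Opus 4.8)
The plan is to reduce the comparison of Hilbert series to a statement about the $p$-torsion of one explicit graded module, and then to attempt a classification of that torsion. With $P=\mathbb Z[x_1,\dots,x_n]$, $Q=\bigoplus_{1\le i<j\le n}P/(x_i-x_j)^{2m+1}P$, $h(F)=\bigoplus_{i<j}(1-s_{i,j})F$ and $M=\operatorname{Coker}(h)$ as introduced above, we have $Q_m(n)_{\mathbb Z}=\operatorname{Ker}(h)$ and the four-term exact sequence $0\to Q_m(n)_{\mathbb Z}\to P\xrightarrow{h}Q\to M\to 0$. Both $P$ and each $P/(x_i-x_j)^{2m+1}P$ are free $\mathbb Z$-modules, so $Q$ is $\mathbb Z$-flat, and the image $I=\operatorname{Im}(h)\subseteq Q$, being a submodule of a free module over the PID $\mathbb Z$, is $\mathbb Z$-flat too. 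Splitting the four-term sequence through $I$, tensoring with $\mathbb F_p$, and using $\operatorname{Tor}_1^{\mathbb Z}(Q,\mathbb F_p)=\operatorname{Tor}_1^{\mathbb Z}(I,\mathbb F_p)=0$ yields a short exact sequence of graded vector spaces
\[
0\longrightarrow Q_m(n)_{\mathbb Z}\otimes_{\mathbb Z}\mathbb F_p\longrightarrow Q_m(n)_{\mathbb F_p}\longrightarrow \operatorname{Tor}_1^{\mathbb Z}(M,\mathbb F_p)\longrightarrow 0 .
\]
Since $Q_m(n)_{\mathbb Z}$ is graded free over $\mathbb Z$ with graded ranks equal to $\dim_{\mathbb C}Q_{m,d}(n)$ (kernels survive the flat base changes $\mathbb Z\to\mathbb Q\to\mathbb C$), it follows that $\dim_{\mathbb F_p}Q_{m,d}(n)-\dim_{\mathbb C}Q_{m,d}(n)=\dim_{\mathbb F_p}M[p]_d$, where $M[p]=\operatorname{Tor}_1^{\mathbb Z}(M,\mathbb F_p)$ is the graded $p$-torsion submodule of $M$. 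Thus the conjecture is equivalent to: $M$ has $p$-torsion only when the displayed inequalities have a solution with $a\ge 1$, $k\ge 0$.

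The second step is to make $M[p]$ tractable. The sequence above identifies $M[p]$, as a graded module over $\Lambda_{\mathbb F_p}=\mathbb F_p[x_1,\dots,x_n]^{S_n}$, with $Q_m(n)_{\mathbb F_p}/\bigl(Q_m(n)_{\mathbb Z}\otimes\mathbb F_p\bigr)$, i.e.\ with the \emph{genuinely new} modular quasi-invariants, those that are not reductions of integral ones. Theorem~\ref{thm:suf} produces such elements by Frobenius twisting, $F=P_k^{p^a}\prod_{i<j}(x_i-x_j)^{2b}$ with $2b=2m+1-p^a(2k+1)\ge 0$, and the content of the conjecture is that these and their $\Lambda_{\mathbb F_p}$-multiples exhaust $M[p]$. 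Concretely I would aim to prove two facts: (i) a \emph{classification}, that $M[p]$ is generated over $\Lambda_{\mathbb F_p}$ by classes of such Frobenius twists — so in particular a nonzero class of minimal degree has a Frobenius representative of the same degree; and (ii) a \emph{range bound}, that the class of $P_k^{p^a}\prod_{i<j}(x_i-x_j)^{2b}$ vanishes in $M[p]$ once its degree exceeds $mn$ (equivalently, outside the inequality window this twist can be corrected by a multiple of $p$ to an integral $m$-quasi-invariant). Granting (i) and (ii) the conjecture follows by bookkeeping identical to that in the proof of Theorem~\ref{thm:suf}: a minimal new class is some $F=P_k^{p^a}\prod(x_i-x_j)^{2b}$ with $b\ge 0$, so $\deg F=\binom n2(2m+1)+p^a\bigl(1-\binom n2-n(n-2)k\bigr)$; by (ii), newness forces $\deg F\le mn$, which rearranges to the lower bound $p^a\ge\bigl(mn(n-2)+\binom n2\bigr)/\bigl(n(n-2)k+\binom n2-1\bigr)$, while $\deg F\le mn$ together with $\deg F\ge\deg P_k^{p^a}=p^a(nk+1)$ gives the upper bound $p^a\le mn/(nk+1)$; the ``$p>mn$'' statement then drops out since $a=0$ is excluded by the Remark after Theorem~\ref{thm:suf} and $p^a\ge p>mn$ contradicts $p^a\le mn/(nk+1)$.

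The main obstacle is precisely (i), together with the ``lifting'' half of (ii); this is why the statement remains conjectural. Part (i) asks one to show that the only source of extra quasi-invariants in characteristic $p$ is the Frobenius — that a non-integral modular quasi-invariant must, in minimal degree, be a pure $p^a$-th power times a symmetric factor — a genuine classification statement with no characteristic-$0$ analogue. I would attack it through modular versions of the characteristic-$0$ machinery: the Dunkl operators $\partial_i+m\sum_{j\ne i}\frac{1-s_{i,j}}{x_i-x_j}$ are defined over $\mathbb Z$ and preserve $\mathbb Z[x_1,\dots,x_n]$, the $\mathfrak{sl}_2$-action on quasi-invariants of Berest--Etingof--Ginzburg makes sense integrally, and one can try to run the Felder--Veselov/Opdam shift-operator description of $Q_m(n)$ over $\mathbb Z$ and control which primes divide the relevant Shapovalov- or $c$-function determinants. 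Over $\mathbb C$ the denominators in the Felder--Veselov formula involve only hook lengths $h_i\le n$, which would explain the harmlessness of primes $p>n$; the extra factor $m$ in the conjectured bound $mn$ is combinatorial, reflecting how much room a Frobenius twist needs in order to land in degree $\le mn$, so (ii) should reduce to an explicit lifting computation while (i) is the deep point. Before committing to the general argument it would be prudent to compute $M[p]$ directly in small cases ($n=3$, modest $m$ and $p$) and confirm that its generators are all Frobenius twists as predicted.
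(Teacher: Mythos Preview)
The statement you are addressing is labelled a \emph{Conjecture} in the paper; the paper does not prove it and offers only the computer evidence tabulated for $n=3$, $m\le 15$, $p\le 50$. So there is no paper proof to compare against, and your proposal should be read as a strategy outline rather than a proof.

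Your cohomological reduction is sound and is exactly the framework the paper already sets up: the module $M=\operatorname{Coker}(h)$ appears in the proof of Proposition~2.3, and your identification of the Hilbert-series discrepancy with $\dim_{\mathbb F_p}M[p]_d$ via the Tor long exact sequence is correct. The subsequent bookkeeping --- that a Frobenius twist $F=P_k^{\,p^a}\prod(x_i-x_j)^{2b}$ with nonzero class and $\deg F\le mn$ forces both displayed inequalities --- is also correct and mirrors the degree computation in Theorem~\ref{thm:suf}.

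The genuine gap, which you correctly flag, is that steps~(i) and~(ii) are the entire content of the conjecture and remain open. Step~(i), the claim that $M[p]$ is generated over $\Lambda_{\mathbb F_p}$ by Frobenius-twist classes, is a strong structural statement about modular quasi-invariants for which no mechanism is known; the Dunkl/shift-operator and $\mathfrak{sl}_2$ ideas you mention are natural but have not been made to work in positive characteristic, and the paper itself remarks that progress likely requires the modular representation theory of~$S_n$. Step~(ii) is also nontrivial: the paper's $n=3$ data show that $M[p]$ is supported well above degree~$mn$ (the conjectured $\mathbb F_p$ Hilbert polynomial $1+2t^d+2t^{6m+3-d}+t^{6m+3}$ differs from the $\mathbb C$ one in high degrees too), so what you need is not that $M[p]$ vanishes there but that any Frobenius twist landing above~$mn$ has its class absorbed by lower-degree ones --- an explicit lifting statement that would itself require proof. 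In summary, your write-up is an honest reformulation of the conjecture into two sub-conjectures plus a correct deduction from them, not a proof; the paper's stance is the same.
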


This is supported by computer calculations, especially in the case of $n=3,4$. They suggest that the Hilbert series takes a form depending on the smallest non-symmetric element of $Q_m(n)$ which is described by the proof of Theorem~\ref{thm:suf} and hence satisfies the conjecture. The following table summarizes the results of our computer program verification for $n=3$, $m\le15$ and $p\le50$. Each box in which the series is greater over $\mathbb F_p$ than over $\mathbb C$ is labeled with its integers $a$, $k$ that make the inequality hold.

\begin{table}[!h]\centering\small \begin{tabular}{c|c|c|c|c|c|c|c|c|c|c|c|c|c|c|c|}
 \diagbox{$m$}{$p$}&2&3&5&7&11&13&17&19&23&29&31&37&41&43&47\\
 \hline
 0&&&&&&&&&&&&&&&\\\hline
 1&&$1,0$&&&&&&&&&&&&&\\\hline
 2&&&$1,0$&&&&&&&&&&&&\\\hline
 3&$3,0$&$2,0$&&$1,0$&&&&&&&&&&&\\\hline
 4&$3,0$&$2,0$&&&$1,0$&&&&&&&&&&\\\hline
 5&&$2,0$&&&$1,0$&$1,0$&&&&&&&&&\\\hline
 6&$4,0$&&&&$1,0$&$1,0$&$1,0$&&&&&&&&\\\hline
 7&$4,0$&$1,2$&$1,1$&&&$1,0$&$1,0$&$1,0$&&&&&&&\\\hline
 8&$4,0$&&&&&&$1,0$&$1,0$&$1,0$&&&&&&\\\hline
 9&$4,0$&$3,0$&$2,0$&&&&$1,0$&$1,0$&$1,0$&&&&&&\\\hline
 10&&$3,0$&$2,0$&$1,1$&&&$1,0$&$1,0$&$1,0$&$1,0$&&&&&\\\hline
 11&$5,0$&$3,0$&$2,0$&&&&&$1,0$&$1,0$&$1,0$&$1,0$&&&&\\\hline
 12&$5,0$&$3,0$&$2,0$&&&&&&$1,0$&$1,0$&$1,0$&&&&\\\hline
 13&$5,0$&$3,0$&$2,0$&&&&&&$1,0$&$1,0$&$1,0$&$1,0$&&&\\\hline
 14&$5,0$&$3,0$&$2,0$&&&&&&$1,0$&$1,0$&$1,0$&$1,0$&$1,0$&&\\\hline
 15&$5,0$&$3,0$&$2,0$&&$1,1$&&&&&$1,0$&$1,0$&$1,0$&$1,0$&$1,0$&\\
 \hline\end{tabular}
\end{table}

Through our programs, we have found that when $n=3$, the Hilbert series takes the form \[\frac{1+2t^d+2t^{6m+3-d}+t^{6m+3}}{(1-t)(1-t^2)(1-t^3)}\] for small $p$, where $d$ is the degree of the smallest non-symmetric generator of $Q_m$ in $\mathbb F_p$. In particular, this smallest non-symmetric polynomial in $Q_m$ is of the form $P_k^{p^a}\prod(x_i-x_j)^{2b}$ where the $P_k$ are as described in Theorem~\ref{thm:suf}. Furthermore, we conjecture that $Q_m$ is a free module over the ring of symmetric polynomials for $n=3$ over any field.

In \cite{felder2003action}, the authors prove some properties of Hilbert series and polynomials over $\mathbb C$, specifically their maximal term and symmetry. We believe that similar results still hold over $\mathbb F_p$, and this is supported by our computer calculations for $n=3,4$.

\begin{Conjecture}\label{conj:mainc}
The largest degree term in the Hilbert polynomial is always $t^{\binom n2(2m+1)}$. Furthermore, when $p$ is an odd prime $Q_m$ is a free module over the ring of the symmetric polynomials of rank~$n!$, and the Hilbert polynomial is palindromic.
\end{Conjecture}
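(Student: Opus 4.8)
The plan is to deduce all three assertions from a single structural statement --- that $Q_m(n)$, viewed as a graded module over the polynomial ring $R=\mathbb F_p[e_1,\dots,e_n]$ of symmetric functions (with $\deg e_i=i$), is \emph{free of rank $n!$ and self-dual} --- and then to establish that statement by pushing the characteristic-zero arguments as far as they survive reduction mod $p$. Some facts are available over any field. Writing $\delta=\prod_{i<j}(x_i-x_j)$, so that $s_{i,j}\delta=-\delta$, one has $(1-s_{i,j})(\delta^{2m}g)=\delta^{2m}(1-s_{i,j})g$, which is divisible by $(x_i-x_j)^{2m+1}$ since $(x_i-x_j)^{2m}\mid\delta^{2m}$ and $(x_i-x_j)\mid(1-s_{i,j})g$; hence $\delta^{2m}\,\mathbb F_p[x_1,\dots,x_n]\subseteq Q_m(n)\subseteq\mathbb F_p[x_1,\dots,x_n]$. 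Also $Q_m(n)$ is a subring, by $(1-s_{i,j})(FG)=\big((1-s_{i,j})F\big)G+(s_{i,j}F)(1-s_{i,j})G$. Since $\mathbb F_p[x_1,\dots,x_n]$ is free over $R$ of rank $n!$ (the descent-monomial basis is characteristic-free), the sandwich forces $Q_m(n)$ to have rank exactly $n!$ over $R$; by the Auslander--Buchsbaum formula over the regular ring $R$, ``free of rank $n!$'' is then equivalent to ``$Q_m(n)$ is Cohen--Macaulay'', i.e.\ ``$e_1,\dots,e_n$ is a regular sequence on $Q_m(n)$''.

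Granting this, the remaining two assertions follow when $p$ is odd. First, $\delta^{2m+1}=\delta^{2m}\cdot\delta\in Q_m(n)$ has degree $\binom n2(2m+1)$, and a short computation (if $g$ is symmetric and $\delta g$ quasi-invariant, then after cancelling the factor $2$ one gets $(x_i-x_j)^{2m}\mid g$ for all $i<j$, so $g\in\delta^{2m}R$) shows that the antisymmetric part of $Q_m(n)$ is exactly $\delta^{2m+1}R$. The self-duality (Gorenstein) part of the structural claim says $\operatorname{Hom}_R(Q_m(n),R)\cong Q_m(n)$ up to a graded shift; by local duality this forces $G_m(t)$ to be palindromic, and comparing it with the $R$-bilinear trace form $(F,G)\mapsto\sum_{w\in S_n}w(FG)$ --- which is perfect after inverting $\delta$, being the trace form of the \'etale $R[\delta^{-1}]$-algebra $\mathbb F_p[x][\delta^{-1}]$ --- pins the shift so that $G_m(t)=t^{\binom n2(2m+1)}G_m(1/t)$. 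Since $G_m(0)=1$, the top term of $G_m(t)$ is then $t^{\binom n2(2m+1)}$ with coefficient $1$.

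The crux --- the step I expect to be the main obstacle --- is proving that $Q_m(n)$ is Cohen--Macaulay and self-dual over $R$ when $p$ is odd, and here the small characteristics $3\le p\le n$ are the genuinely hard cases. Over $\mathbb C$ these facts come from the representation theory of the rational Cherednik algebra $\mathbf H_c$ at $c=m$ (after Feigin--Veselov, and Etingof--Ginzburg for Gorenstein-ness), where $Q_m(n)$ is realised as a spherical part of a standard module whose Cohen--Macaulayness is known; but Proposition~\ref{prop:ge} shows this cannot be transported by a naive reduction mod $p$, since dimensions can strictly jump. I would try two routes. One is to rebuild the Cherednik-algebra argument over $\mathbb F_p$ using the modular analogue of category $\mathcal O$ and its contravariant form --- which requires controlling the non-semisimple modular representation theory of $S_n$ exactly when $p\le n$, where the symmetrizing idempotent $\frac1{n!}\sum_w w$ is unavailable. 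The other is a direct commutative-algebra attack: $\mathbb F_p[x]$ is module-finite over $Q_m(n)$ and the two rings agree after inverting $\delta$, so it would suffice to show $H^i_{\mathfrak m}(Q_m(n))=0$ for $i<n$ by analysing $Q_m(n)$ explicitly along the codimension-one strata of the discriminant, where the defining congruences first bite. In the range $p\mid n!$ with $p$ odd --- where $S_n$-isotypic splitting, symmetrization, and nondegeneracy of the trace form all fail simultaneously --- both routes are at their hardest, and the conjecture currently rests there only on the $n=3,4$ computations recorded after Conjecture~\ref{conj:mn}.
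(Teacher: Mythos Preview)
The statement you are attempting is Conjecture~\ref{conj:mainc}, and the paper offers \emph{no proof} of it: it is stated as a conjecture, supported only by the computer calculations for $n=3,4$ mentioned just before it and by the Remark that follows (which shows freeness genuinely fails for $p=2$, $n=4$, $m=1$). There is therefore no proof in the paper to compare your proposal against.

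Your write-up is likewise not a proof, and you say so explicitly: you reduce everything to the assertion that $Q_m(n)$ is Cohen--Macaulay and Gorenstein over $R=\mathbb F_p[e_1,\dots,e_n]$ for odd $p$, and then sketch two possible attacks (a modular Cherednik-algebra argument, and a direct local-cohomology computation along the discriminant) without completing either. The reduction itself is sound and mirrors the characteristic-zero story; the sandwich $\delta^{2m}\,\mathbb F_p[x]\subseteq Q_m(n)\subseteq\mathbb F_p[x]$, the rank-$n!$ count, and the identification of the antisymmetric part with $\delta^{2m+1}R$ when $p$ is odd are all correct. But this does not go beyond what the authors already knew when they chose to label the statement a conjecture: the hard input --- Cohen--Macaulayness and self-duality in the range $p\le n$ where symmetrization and $S_n$-isotypic decomposition fail --- is left open in your proposal exactly as it is in the paper.

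One further gap even at the conditional level: the first clause of the conjecture, that the top term of $G_m(t)$ is $t^{\binom n2(2m+1)}$, is asserted for all primes, while your derivation of it passes through palindromicity and hence only covers odd $p$. For $p=2$ a separate argument would be needed, and the Remark following the conjecture shows that freeness --- and with it your route --- is unavailable there.
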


\begin{Remark}
The condition that $p$ is odd appears to be necessary. Indeed, a computer calculation shows that when $n=4$, $m=1$ and $p=2$, the Hilbert series is \begin{gather*}
1+t+2t^2+3t^3+8t^4+9t^5+15t^6+23t^7+38t^8+50t^9+71t^{10}+\cdots\\
\qquad {} = \frac{1+3t^4+3t^7+5t^8+3t^9-t^{10}+\cdots}{(1-t)(1-t^2)(1-t^3)(1-t^4)},
\end{gather*} and the negative coefficient implies that the module cannot be free. In particular, computing the polynomial up to~$t^{18}$ also demonstrates that it is not symmetric.
\end{Remark}

\section{Twisted quasi-invariants}\label{sec:twist}

\subsection{A generalization of quasi-invariants}
In \cite{braverman2016cyclotomic}, Braverman, Etingof and Finkelberg introduced quasi-invariants twisted by a monomial $x_1^{a_1}\cdots x_n^{a_n}$, where $a_1\dots,a_n\in\mathbb C$. We further generalize this by allowing the twist to be a~product of general functions. To be more precise, let $m$ be a nonnegative integer. Fix one-variable meromorphic functions $f_1,f_2,\dots,f_n$, and denote by $D\subset \mathbb{C}^n$ the domain where the product $f_1(x_1)f_2(x_2)\cdots f_n(x_n)$ and its inverse are smooth.
\begin{Definition}
We define $Q_m(f_1,\dots,f_n)$ to be the space of polynomials $F\in\mathbb C[x_1,\dots,x_n]$ for which \[\frac{(1-s_{i,j})\left(f_1(x_1)\cdots f_n(x_n)F(x_1,\dots,x_n)\right)}{(x_i-x_j)^{2m+1}}\] is smooth on $D$ for all $1\le i<j\le n$.
\end{Definition}
In the following, for simplicity when we say smooth functions we always mean smooth on $D$.
\begin{Remark}
In \cite{braverman2016cyclotomic}, the authors studied the case $f_i(x)=x^{a_i}$ for $a_i\in\mathbb C$, and denoted $Q_m(f_1,\dots,f_n)$ by $Q_m(a_1,\dots,a_n)$. In cases of unambiguous use, we will shorten this to~$Q_m(n)$.
\end{Remark}

Similar to \cite{braverman2016cyclotomic}, we believe that $Q_m(f_1,\dots,f_n)$ is in general a free module.

\begin{Conjecture}\label{conj:big}
For generic $f_1,\dots,f_n$, in particular when $\frac{f_i}{f_j}$ is not a monomial in $x_1,\dots,x_n$, $Q_m(f_1,\dots,f_n)$ is a free module over the ring of symmetric polynomials in $x_1,\dots,x_n$.
\end{Conjecture}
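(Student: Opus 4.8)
The plan is to deduce freeness from a Cohen--Macaulayness statement. Since $\mathbb{C}[x_1,\dots,x_n]^{S_n}\cong\mathbb{C}[e_1,\dots,e_n]$ is a polynomial ring and $Q_m(f_1,\dots,f_n)$ is a finitely generated graded module over it of full support (after inverting the discriminant the defining conditions become vacuous, so the localization is all of $\mathbb{C}[x_1,\dots,x_n]$, which has rank $n!$), the graded Auslander--Buchsbaum theorem says $Q_m(f_1,\dots,f_n)$ is free exactly when $\operatorname{depth}_{\mathbb{C}[e_1,\dots,e_n]}Q_m(f_1,\dots,f_n)=n$. So the goal becomes: show that $e_1,\dots,e_n$ (equivalently the power sums) form a regular sequence on $Q_m(f_1,\dots,f_n)$.

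First I would put the defining condition in a workable form. The smoothness of the $(i,j)$-expression can fail only along $\{x_i=x_j\}$, and near that locus one may divide by smooth invertible factors to see that $F\in Q_m(f_1,\dots,f_n)$ if and only if, for every pair $i<j$,
\[
\frac{\rho_{ij}(x_i)\,F-\rho_{ij}(x_j)\,s_{ij}F}{(x_i-x_j)^{2m+1}},\qquad \rho_{ij}:=f_i/f_j,
\]
is smooth on $D$. Expanding along $x_i=x_j$ this amounts to $m$ linear jet conditions per diagonal, twisted by $\rho_{ij}$, which degenerate to ordinary $m$-quasi-invariance exactly when every $\rho_{ij}\equiv1$. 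In particular $\mathbb{C}[x_1,\dots,x_n]$ surjects onto a coherent sheaf supported on $\bigcup_{i<j}\{x_i=x_j\}$ whose kernel is $Q_m(f_1,\dots,f_n)$, which re-proves finite generation over $\mathbb{C}[x_1,\dots,x_n]^{S_n}$ and the shape of the Hilbert series.

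The heart of the matter is to produce enough extra structure on $Q_m(f_1,\dots,f_n)$ to force the depth to be maximal, mimicking the role of the spherical rational Cherednik algebra $eH_ce$ in the untwisted case (Berest--Etingof--Ginzburg) and in the monomial case (Braverman--Etingof--Finkelberg). Concretely, I would conjugate the Dunkl operators and the Cherednik algebra by the (multivalued) twist $\Phi=\prod_k f_k(x_k)$, forming operators $\widetilde D_i=\Phi\circ D_i\circ\Phi^{-1}$ together with multiplication by polynomials; these act on the space of $\Phi$-twisted quasi-invariants, of which $Q_m(f_1,\dots,f_n)$ is the ``polynomial part'', and one wants the spherical part $eA_fe$ of the resulting algebra $A_f$ to contain $\mathbb{C}[x_1,\dots,x_n]^{S_n}$, to have finite global dimension (or a smooth geometric model in the spirit of the Calogero--Moser space), and to make $Q_m(f_1,\dots,f_n)$ a finitely generated Cohen--Macaulay module over it. Since $eA_fe$ and $\mathbb{C}[e_1,\dots,e_n]$ have the same Krull dimension $n$, Cohen--Macaulayness over the former descends to the latter, which is the desired freeness. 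As a sanity check, for $n=2$ this collapses entirely: $\mathbb{C}[e_1,e_2]$ is a two-dimensional regular ring, so it suffices to verify that $Q_m(f_1,f_2)$ satisfies Serre's condition $S_2$ --- equivalently is reflexive --- which follows directly from the ``divisorial'' jet description above, and this is presumably behind the two-variable computations in the paper.

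The step I expect to be the obstacle is precisely the construction and control of $A_f$. For non-monomial $f_i$ the twist $\Phi$ does not intertwine the reflection-group structure, there is no honest rational Cherednik algebra, and it is unclear that the conjugated operators close up into an algebra enjoying the finiteness one needs (finite global dimension, a Morita-type statement, or a smooth moduli interpretation); indeed when some $\rho_{ij}$ is a nonconstant monomial the relevant algebra is a genuinely different (cyclotomic) object, and the conjecture is exactly that the non-monomial case still behaves like the untwisted one. A degeneration argument from the monomial locus looks unpromising because the space of admissible twists is infinite-dimensional with the monomial locus highly non-generic in it, so there is no evident flat family, and semicontinuity of depth runs the wrong way. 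Hence I would expect the real work to be either isolating the minimal algebraic or geometric structure surviving a general twist that still forces Cohen--Macaulayness, or a direct homological estimate of $\operatorname{Ext}^{>0}_{\mathbb{C}[x_1,\dots,x_n]^{S_n}}(\mathbb{C},Q_m(f_1,\dots,f_n))$ from the jet description, which for $n\geq3$ is delicate because the conditions along different diagonals interact at the codimension-two strata $\{x_i=x_j=x_k\}$.
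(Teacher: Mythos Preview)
The statement you are addressing is labeled \emph{Conjecture} in the paper and is not proved there; the authors only remark in Section~\ref{sec:fut} that a possible approach is ``to construct a Cherednik-like algebra related to $f_1,\dots,f_n$ and the quasi-invariant polynomials'', which is exactly the strategy you outline with your conjugated algebra $A_f$. So there is no paper proof to compare against, and your proposal is in line with the authors' own suggested direction rather than an alternative to an existing argument.

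That said, your write-up is honest about its status: it is a plan, not a proof. The reduction ``freeness $\Leftrightarrow$ Cohen--Macaulayness over $\mathbb{C}[e_1,\dots,e_n]$'' is correct and standard, but the substantive step---building $A_f$ for non-monomial twists with enough homological control to force maximal depth---is precisely the open problem, and you identify it as such. Your $n=2$ ``sanity check'' is also not quite a proof as written: the assertion that the jet description makes $Q_m(f_1,f_2)$ reflexive (hence free over a two-dimensional regular ring) needs justification, since being cut out by linear conditions supported on a divisor does not by itself give $S_2$; the paper's own $n=2$ analysis (Theorem~\ref{thm:main}) proceeds instead by exhibiting explicit generators and relations, and even there freeness is not asserted in general. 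In short, nothing you wrote is wrong, but nothing closes the gap either, and the paper does not claim to.
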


\subsection{Rationality of the logarithmic derivative}\label{subsec:dlog}
Note that for any $f_1,\dots,f_n$ and $i$, $j$, any $F\in(x_i-x_j)^{2m}\mathbb{C}[x_1,\dots,x_n]$ has the property that $\frac{(1-s_{i,j})(f_1(x_1)\cdots f_n(x_n)F)}{(x_i-x_j)^{2m+1}}$ is smooth. This is a trivial case, thus we want to find out which choices of the $f_i,f_j$ yield polynomials in $Q_m(n)$ that are not in $(x_i-x_j)^{2m}\mathbb{C}[x_1,\dots,x_n]$.

\begin{Lemma}\label{prop:squ}
If $F$ is divisible by $x_i-x_j$ and $\frac{(1-s_{i,j})(f_1(x_1)\cdots f_n(x_n)F)}{(x_i-x_j)^{2(k+1)+1}}$ is smooth then $(x_i-x_j)^2\,|\, F$ and $\frac{(1-s_{i,j})\big(f_1(x_1)\cdots f_n(x_n)\frac F{(x_i-x_j)^2}\big)}{(x_i-x_j)^{2k+1}}$ is smooth.
\end{Lemma}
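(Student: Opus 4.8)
The plan is to localize everything along the diagonal $\{x_i=x_j\}$, where $s_{i,j}$ acts as the identity. Write $g=f_1(x_1)\cdots f_n(x_n)$. Since each $f_\ell$ is meromorphic, $D$ is a dense open subset of $\mathbb C^n$ whose complement lies in a countable union of hyperplanes $\{x_\ell=c\}$ (the zeros and poles of $f_\ell$); in particular we may assume $D\ne\emptyset$. I would single out the set $E:=\{x_i=x_j\}\cap D$, which is dense in the hyperplane $\{x_i=x_j\}$; the crucial point is that every $P\in E$ satisfies $s_{i,j}(P)=P$, hence $P\in s_{i,j}(D)$ as well, so both $g$ and $g\circ s_{i,j}$ are smooth and nonvanishing on a neighbourhood of $P$.

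First I would prove the divisibility. Using $(x_i-x_j)\,|\,F$, write $F=(x_i-x_j)F_1$ with $F_1$ a polynomial. Because $s_{i,j}(x_i-x_j)=-(x_i-x_j)$, a one-line computation gives
\[
(1-s_{i,j})(gF)=(x_i-x_j)(1+s_{i,j})(gF_1),
\]
and therefore
\[
\frac{(1-s_{i,j})(gF)}{(x_i-x_j)^{2(k+1)+1}}=\frac{(1+s_{i,j})(gF_1)}{(x_i-x_j)^{2k+2}}
\]
away from $\{x_i=x_j\}$. By hypothesis the left-hand side extends to a smooth function $h$ on $D$, so $(1+s_{i,j})(gF_1)=(x_i-x_j)^{2k+2}h$ extends to a smooth function on $D$, and since $2k+2\ge1$ this extension vanishes on $E$. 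On the other hand, for $P\in E$ we have $(1+s_{i,j})(gF_1)(P)=2g(P)F_1(P)$ with $g(P)\ne0$, so $F_1$ vanishes on the dense subset $E$ of $\{x_i=x_j\}$. Hence $(x_i-x_j)\,|\,F_1$, i.e.\ $(x_i-x_j)^2\,|\,F$.

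The remaining claim is then essentially formal: $F/(x_i-x_j)^2$ is a genuine polynomial, and since $(x_i-x_j)^2$ is $s_{i,j}$-invariant one has $(1-s_{i,j})\big(g\,F/(x_i-x_j)^2\big)=(1-s_{i,j})(gF)/(x_i-x_j)^2$ wherever defined, whence
\[
\frac{(1-s_{i,j})\big(g\,F/(x_i-x_j)^2\big)}{(x_i-x_j)^{2k+1}}=\frac{(1-s_{i,j})(gF)}{(x_i-x_j)^{2(k+1)+1}},
\]
which is smooth on $D$ by hypothesis. I expect the only delicate point to be the bookkeeping around what "smooth on $D$" means for these a priori rational expressions — in particular, making sure that the evaluation in the previous paragraph really takes place along $E$, where $s_{i,j}$ is trivial and $g$ is invertible, so that order-of-vanishing information about the \emph{symmetric} function $(1+s_{i,j})(gF_1)$ does force the honest vanishing of $F_1$. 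Beyond that I foresee no obstacle.
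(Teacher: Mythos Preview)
Your proof is correct and follows essentially the same route as the paper's: write $F=(x_i-x_j)G$, observe that the quasi-invariance condition then forces the \emph{symmetric} combination $gG+s_{i,j}(gG)$ to vanish to order $2k+2$ along $x_i=x_j$, evaluate on the diagonal (where $g$ is invertible) to get $G(x,x)=0$, and deduce $(x_i-x_j)^2\mid F$; the second statement is then formal. Your write-up is a bit more explicit about why the evaluation along $E=D\cap\{x_i=x_j\}$ is legitimate, but the underlying argument is identical. One purely notational caution: the paper later reserves $F_1$ for $\partial F/\partial x$, so using $F_1$ for $F/(x_i-x_j)$ would clash.
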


\begin{proof}
Let $F(x_i,x_j)=(x_i-x_j)G(x_i,x_j)$ for some polynomial $G$. Here, we write $F(x_i,x_j)$ for $F$ and $G(x_i,x_j)$ for $G$ to ease the notation, as we will only consider it as a function in the $i$th and $j$th coordinates. Substituting, the condition becomes \[f_i(x_i)f_j(x_j)G(x_i,x_j)+f_j(x_i)f_i(x_j)G(x_j,x_i)=(x_i-x_j)^{2k+2}g(x_i,x_j)\]
(here and for the rest of this section, $g$ (with a possible subscript, e.g., $g_a$, $g_b$, $g_c$) denotes a~function smooth on $D$) and setting $x_i=x_j=x$ gives \[f_i(x)f_j(x)G(x,x)=0,\] so $G(x,x)=0$. This implies that $(x_i-x_j)\,|\, G$, so $(x_i-x_j)^2\,|\, F$, as desired. The second part of the proposition follows by definition as $\frac F{(x_i-x_j)^2}$ is smooth.
\end{proof}

\begin{Proposition}\label{dlograt}
Let $h_{i,j}=\frac{f_i}{f_j}$. If $\operatorname{dlog}(h_{i,j})$ is not a rational function, then $Q_m(n)\subset(x_i-x_j)^{2m+1}\mathbb C[x_1,\dots,x_n]$. Here, $\operatorname{dlog}(f)=\frac{f'}f$ denotes the logarithmic derivative of a function $f$.
\end{Proposition}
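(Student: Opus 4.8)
The plan is to show that if $F\in Q_m(n)$ is not divisible by $(x_i-x_j)^{2m+1}$, then $\operatorname{dlog}(h_{i,j})$ is forced to be rational, contrapositively establishing the claim. Write $F(x_i,x_j)$ for $F$ regarded as a function of its $i$th and $j$th coordinates (with the remaining coordinates treated as parameters). By the definition of $Q_m(n)$, the function
\[
f_i(x_i)f_j(x_j)F(x_i,x_j) - f_j(x_i)f_i(x_j)F(x_j,x_i)
\]
is divisible by $(x_i-x_j)^{2m+1}$ in the sense that the quotient is smooth on $D$. Let $r\ge 0$ be the exact order of vanishing of $F(x_i,x_j)$ at $x_i=x_j$, and write $F(x_i,x_j)=(x_i-x_j)^r G(x_i,x_j)$ with $G(x,x)\not\equiv 0$ (as a function of the remaining parameters). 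Applying Lemma~\ref{prop:squ} repeatedly, each application strips off a factor $(x_i-x_j)^2$ from $F$ while reducing the quasi-invariance exponent by $2$; so after $\lfloor r/2\rfloor$ steps we are reduced to the case where $F$ has vanishing order $0$ or $1$ at $x_i=x_j$ and the relevant exponent is $2m+1-2\lfloor r/2\rfloor$. Since $F\notin (x_i-x_j)^{2m+1}\mathbb C[x_1,\dots,x_n]$ means $r\le 2m$, this reduced exponent is at least $1$. Thus it suffices to treat the two base cases $r=0$ and $r=1$ with exponent $2\ell+1\ge 1$ and derive rationality of $\operatorname{dlog}(h_{i,j})$.

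In the base case, set $h=h_{i,j}=f_i/f_j$. Dividing the quasi-invariance relation by $f_j(x_i)f_j(x_j)$ gives
\[
h(x_i)F(x_i,x_j) - h(x_j)F(x_j,x_i) = (x_i-x_j)^{2\ell+1}\,\tilde g(x_i,x_j),
\]
where $\tilde g$ is smooth on $D$. Now I would expand both sides in a Taylor series in $(x_i-x_j)$ around a generic point $x_i=x_j=x$ of $D$. Writing $u=x_i-x_j$ and Taylor-expanding $h(x_i)=h(x+u)$, $h(x_j)=h(x)$ (after symmetrizing appropriately about the midpoint, or simply around $x_j=x$), and similarly for $F(x_i,x_j)$ and $F(x_j,x_i)$, the left-hand side is a power series in $u$ with coefficients that are differential polynomials in $h$ and in $F$ evaluated along the diagonal. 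The condition that the $u^0, u^1, \dots, u^{2\ell}$ coefficients all vanish yields a system of linear relations. The key observation is that since $G(x,x)\not\equiv 0$ (so $F$ does not vanish to too high an order along the diagonal), these relations can be solved to express successive derivatives of $h$ along the diagonal in terms of $h$, $h'$ and the diagonal Taylor data of the \emph{polynomial} $F$. Because $F$ is a polynomial, that data is rational in $x$; and because $h' = h \cdot \operatorname{dlog}(h)$, solving the first nontrivial relation (the vanishing of the $u^1$ coefficient when $r=0$, or the $u^2$ coefficient when $r=1$) for $\operatorname{dlog}(h)$ expresses it as a ratio of polynomial quantities, hence as a rational function of $x$.

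The main obstacle is making the ``solve for $\operatorname{dlog}(h)$'' step rigorous and uniform: one must check that the linear relations coming from the low-order Taylor coefficients are genuinely nondegenerate, i.e., that the coefficient of $\operatorname{dlog}(h)$ in the relevant relation does not itself vanish identically. This is where the hypothesis $F\notin (x_i-x_j)^{2m+1}\mathbb C[x_1,\dots,x_n]$ — equivalently, that we have genuinely reduced to a case with nonzero diagonal data — is essential: if that coefficient vanished, one could absorb another factor of $(x_i-x_j)^2$ via Lemma~\ref{prop:squ}, contradicting minimality of $r$ after the reduction. I would also need to handle the bookkeeping of the ``remaining'' variables $x_1,\dots,\widehat{x_i},\dots,\widehat{x_j},\dots,x_n$: the relation expressing $\operatorname{dlog}(h_{i,j})(x)$ holds for each fixed choice of those variables, but since $\operatorname{dlog}(h_{i,j})$ depends only on the single variable $x$, it must be rational in $x$ once it is rational for one generic choice of the others. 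Assembling these pieces gives: either $F$ is divisible by $(x_i-x_j)^{2m+1}$, or $\operatorname{dlog}(h_{i,j})$ is rational — which is the contrapositive of the proposition.
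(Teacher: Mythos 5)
Your proposal is, at bottom, the same computation as the paper's, only run backward and left less explicit. The paper argues forward by induction on $m$: for $m>0$ it differentiates $h_{i,j}(x_i)F(x_i,x_j)-h_{i,j}(x_j)F(x_j,x_i)=(x_i-x_j)^{2m+1}g_b$ once with respect to $x_i$, sets $x_i=x_j=x$, and obtains the explicit identity $\operatorname{dlog}(h_{i,j})(x)\,F(x,x)=F_2(x,x)-F_1(x,x)$. Since the right-hand side is a rational function of $x$, irrationality of $\operatorname{dlog}(h_{i,j})$ forces $F(x,x)=0$, hence $(x_i-x_j)\mid F$; Lemma~\ref{prop:squ} upgrades this to $(x_i-x_j)^2\mid F$ with $F/(x_i-x_j)^2\in Q_{m-1}(n)$, and induction finishes. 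The first-order Taylor coefficient you propose to extract along $x_i=x_j$ is exactly this derivative identity, and your ``strip $(x_i-x_j)^2$-factors via Lemma~\ref{prop:squ}'' is the same induction descent. Writing the identity out, instead of speaking of ``a system of relations to be solved for $\operatorname{dlog}(h)$,'' removes the vagueness you yourself flag: $\operatorname{dlog}(h_{i,j})(x)$ appears linearly with coefficient $F(x,x)$, and whether that coefficient vanishes is precisely the dichotomy that drives the induction, not a separate nondegeneracy check.

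The genuine gap is your base case at reduced exponent $1$. You allow the exact diagonal vanishing order $r$ of $F$ to run up to $2m$, so after stripping the exponent $2m+1-2\lfloor r/2\rfloor$ can equal $1$; but then the first-order Taylor coefficient on the right-hand side is $\tilde g(x,x)$, which is only smooth and not rational, so the identity reads $\operatorname{dlog}(h)(x)F'(x,x)=F'_2(x,x)-F'_1(x,x)+\tilde g(x,x)/h(x)$ and gives no rationality constraint at all. This is not patchable, because the containment $Q_m(n)\subset(x_i-x_j)^{2m+1}\mathbb C[x_1,\dots,x_n]$ as literally written is false: $F=(x_i-x_j)^{2m}$ lies in $Q_m(n)$ for every choice of $f_1,\dots,f_n$. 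The intended exponent is $2m$, which is what the surrounding text uses (the lead-in to this proposition and the opening of Section~\ref{subsec:free} both speak of $(x-y)^{2m}\mathbb C[x,y]$) and what the induction actually delivers. With $2m$ in place one has $r\le 2m-1$; an odd such $r$ is then impossible (stripping would leave vanishing order exactly $1$ with exponent at least $3$, contradicting Lemma~\ref{prop:squ}), the reduced exponent is always at least $3$, and the differentiate-and-evaluate step closes cleanly. Your argument, tightened this way and with the Taylor bookkeeping replaced by the single differentiation, becomes the paper's proof.
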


\begin{proof}
The proposition is trivial for $m=0$. For $m>0$, note that $F\in Q_m(n)$ if and only if \begin{gather*}
f_i(x_i)f_j(x_j)F(x_i,x_j)-f_j(x_i)f_i(x_j)F(x_j,x_i)=(x_i-x_j)^{2m+1}g_a(x_i,x_j),\\
h_{i,j}(x_i)F(x_i,x_j)-h_{i,j}(x_j)F(x_j,x_i)=(x_i-x_j)^{2m+1}g_b(x_i,x_j).\end{gather*} Here, we treat the rest of the functions and variables as constants. Differentiating with respect to $x_i$, we have that \[h_{i,j}(x_i)(\operatorname{dlog}(h_{i,j})(x_i)F(x_i,x_j)+F_1(x_i,x_j))-h_{i,j}(x_j)F_2(x_j,x_i)=(x_i-x_j)^{2m}g_c(x_i,x_j),\]
where for a function $F(x,y)$ we define $F_1=\frac{\partial F}{\partial x}$ and $F_2=\frac{\partial F}{\partial y}$. Setting $x_i=x_j=x$, we have that \begin{gather*}
h_{i,j}(x)(\operatorname{dlog}(h_{i,j})(x)F(x,x)+F_1(x,x))-h_{i,j}(x)F_2(x,x)=0,\\
\operatorname{dlog}(h_{i,j})(x)F(x,x)=F_2(x,x)-F_1(x,x),
\end{gather*} which means that $F(x,x)=0$. Otherwise, we would have \[\operatorname{dlog}(h_{i,j})(x)=\frac{F_2(x,x)-F_1(x,x)}{F(x,x)},\] which is a contradiction as the right hand side is a rational function. Hence, $(x_i-x_j)\,|\, F$. Now, by Lemma~\ref{prop:squ} we have $(x_i-x_j)^2\,|\, F$ and $\frac F{(x_i-x_j)^2}\in Q_{m-1}(n)$, which implies the desired result by a straightforward induction.
\end{proof}

\subsection[Hilbert series for $n=2$]{Hilbert series for $\boldsymbol{n=2}$}\label{subsec:free}

Let $n=2$, $x=x_1$, and $y=x_2$. Note that scaling $f_1$ and $f_2$ by some smooth function does not affect $Q_m(2)$. Hence, we may multiply them both by $\frac1{f_2}$ and let $f=\frac{f_1}{f_2}$. For convenience, we use $Q_m(f)$ to denote the space of quasi-invariants. Throughout this section, we will let $\operatorname{dlog}(f(x))=\frac{p(x)}{q(x)}$ for relatively prime $p,q\in\mathbb C[x]$, as we have from Section~\ref{subsec:dlog} that either $Q_m=(x-y)^{2m}\mathbb C[x,y]$ or $\operatorname{dlog}(f(x))$ is a rational function. For convenience, we will also set $F_1=\frac{\partial F}{\partial x}$, $F_2=\frac{\partial F}{\partial y}$, and $F_{12}=\frac{\partial^2 F}{\partial x\partial y}$.

\begin{Lemma}\label{lem:ind}
If $F(x,y)\in Q_m(f)$, then
\begin{gather*} p(x)F_2(x,y)+q(x)F_{12}(x,y)\in Q_{m-1}\left(\frac fq\right), \\ -p(y)F_1(x,y)+q(y)F_{12}(x,y)\in Q_{m-1}(fq).
\end{gather*}
\end{Lemma}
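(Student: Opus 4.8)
The plan is to deduce both inclusions from a single principle: applying the mixed partial $\partial_x\partial_y$ to a function of the form $(x-y)^{2m+1}g$ with $g$ smooth produces a function of the form $(x-y)^{2m-1}g'$ with $g'$ smooth. Indeed, expanding $\partial_x\partial_y\bigl((x-y)^{2m+1}g\bigr)$ by the product rule, the term of lowest order in $(x-y)$ is $(2m+1)(2m)(x-y)^{2m-1}g$, and every remaining term carries a strictly higher power of $(x-y)$ multiplied by a partial derivative of $g$; since $D$ is open these partials are again smooth, so the whole expression is $(x-y)^{2m-1}$ times a smooth function. (Here $m\ge 1$, which is implicit in the appearance of $Q_{m-1}$.)

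For the first inclusion I would start from the defining relation of $Q_m(f)$, namely $f(x)F(x,y)-f(y)F(y,x)=(x-y)^{2m+1}g$ with $g$ smooth on $D$, and rewrite the target combination as an exact mixed second derivative. Using $f'=f\operatorname{dlog}(f)=fp/q$ one checks the identity
\[\frac{f(x)}{q(x)}\bigl(p(x)F_2(x,y)+q(x)F_{12}(x,y)\bigr)=\partial_x\bigl(f(x)F_2(x,y)\bigr)=\partial_x\partial_y\bigl(f(x)F(x,y)\bigr),\]
and, applying the chain rule carefully to the transposed argument $F(y,x)$, the analogous identity $\frac{f(y)}{q(y)}\bigl(p(y)F_2(y,x)+q(y)F_{12}(y,x)\bigr)=\partial_x\partial_y\bigl(f(y)F(y,x)\bigr)$. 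Writing $G(x,y)=p(x)F_2(x,y)+q(x)F_{12}(x,y)$ and subtracting, the left side becomes $\frac{f(x)}{q(x)}G(x,y)-\frac{f(y)}{q(y)}G(y,x)$ while the right side is $\partial_x\partial_y\bigl((x-y)^{2m+1}g\bigr)$; by the principle above this equals $(x-y)^{2m-1}$ times a smooth function, which is exactly the condition $G\in Q_{m-1}(f/q)$.

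For the second inclusion the same scheme applies after one preliminary normalization: divide the defining relation by the smooth, nowhere-vanishing function $f(x)f(y)$ to obtain $\frac{F(x,y)}{f(y)}-\frac{F(y,x)}{f(x)}=(x-y)^{2m+1}g'$ with $g'$ smooth. Using $\partial_y\bigl(F_1(x,y)/f(y)\bigr)=f(y)^{-1}\bigl(F_{12}(x,y)-\operatorname{dlog}(f)(y)F_1(x,y)\bigr)$ one checks $-p(y)F_1(x,y)+q(y)F_{12}(x,y)=f(y)q(y)\,\partial_x\partial_y\bigl(F(x,y)/f(y)\bigr)$, and the same computation on the transposed arguments gives $-p(x)F_1(y,x)+q(x)F_{12}(y,x)=f(x)q(x)\,\partial_x\partial_y\bigl(F(y,x)/f(x)\bigr)$. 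Writing $H(x,y)=-p(y)F_1(x,y)+q(y)F_{12}(x,y)$ and dividing $f(x)q(x)H(x,y)-f(y)q(y)H(y,x)$ through by the smooth nowhere-vanishing factor $f(x)q(x)f(y)q(y)$, it becomes $\partial_x\partial_y\bigl(F(x,y)/f(y)-F(y,x)/f(x)\bigr)$; by the principle this is $(x-y)^{2m-1}$ times a smooth function, i.e.\ $H\in Q_{m-1}(fq)$.

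The substance of the proof is the two algebraic identities rewriting $pF_2+qF_{12}$ and $-pF_1+qF_{12}$ as exact mixed second derivatives; these are elementary but require care with the chain rule on the transposed term $F(y,x)$ and with the substitution $f'=fp/q$. The only other point to watch is the bookkeeping that the first combination matches the twist $f/q$ and the second matches $fq$, together with the observation that all the divisions performed (by $q(x)$, $q(y)$, $f(x)$, $f(y)$) are by functions smooth and nonvanishing on the relevant domain, so that "smooth" is preserved throughout. The conceptual heart — that $\partial_x\partial_y$ lowers the order of vanishing at the diagonal $x=y$ by exactly two, hence takes $Q_m$-type conditions to $Q_{m-1}$-type conditions — is the whole point; the rest is routine verification.
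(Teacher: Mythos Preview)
Your proposal is correct and follows essentially the same route as the paper: both apply $\partial_x\partial_y$ to the defining relation $f(x)F(x,y)-f(y)F(y,x)=(x-y)^{2m+1}g$, use $f'=fp/q$ to factor out $f/q$, and then handle the second inclusion by first dividing through by $f(x)f(y)$. The only cosmetic difference is that the paper obtains the second inclusion by reapplying the first result to $1/f$ (with $\operatorname{dlog}(1/f)=-p/q$) and the transposed polynomial, then clearing denominators, whereas you redo the mixed-derivative computation directly; the content is identical.
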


\begin{proof}
We begin with our quasi-invariant condition, which in our case of $n=2$ is \[f(x)F(x,y)-f(y)F(y,x)=(x-y)^{2m+1}g_a(x,y).\]
Differentiating by $x$ and then $y$, we obtain \[f'(x)F_2(x,y)+f(x)F_{12}(x,y)-f'(y)F_2(y,x)-f(y)F_{12}(y,x)=(x-y)^{2m-1}g_b(x,y).\]
By the definition of $p$ and $q$, this is equivalent to \begin{gather*}
\frac{f(x)}{q(x)}\left(p(x)F_2(x,y)+q(x)F_{12}(x,y)\right)-\frac{f(y)}{q(y)}\left(p(y)F_2(y,x)+q(y)F_{12}(y,x)\right)\\
\qquad {} =(x-y)^{2m-1}g_b(x,y),
\end{gather*} which is exactly the quasi-invariant condition that is desired.

Dividing our quasi-invariant condition by $f(x)f(y)$ gives \[\frac1{f(x)}F(y,x)-\frac1{f(y)}F(x,y)=(x-y)^{2m+1}g_c(x,y).\]
(Note that $g_c$ is a function smooth on $D$, because $\frac{1}{f(x)f(y)}$ is smooth on $D$ by assumption.)
Thus, $-p(x)F_1(y,x)+q(x)F_{12}(y,x)\in Q_{m-1}\big(\frac1{fq}\big)$ by the above. Expanding the quasi-invariant condition and multiplying by $f(x)f(y)q(x)q(y)$, we obtain the equivalent statement $-p(y)F_1(x,y)+q(y)F_{12}(x,y)\in Q_{m-1}(fq)$, as desired.
\end{proof}

Now, we specialize to the case in which $f(x)=\prod_{i=1}^k(x-a_i)^{b_i}$ for arbitrary complex numbers~$a_i$,~$b_i$. Note that in this case $\operatorname{dlog}(f)=\sum_{i=1}^k\frac{b_i}{x-a_i}$.

\begin{Definition}
For a nonnegative integer $m$ and a complex number $z$, denote \[d_m(z)=\begin{cases}\min(m,|z|) & \text{if }z\in\mathbb{Z},\\m & \text{otherwise}, \end{cases}\]
and \[d_m(f)=\sum_{i=1}^k d_m(b_i),\] where $f(x)=\prod_{i=1}^k(x-a_i)^{b_i}$ for $a_1,\dots,a_k,b_1,\dots,b_k\in\mathbb C$ with $a_1,\dots,a_k$ pairwise distinct.
\end{Definition}

\begin{Lemma}\label{lem:div}
 We have that \[\prod_{i=1}^k(x-a_i)^{d_m(b_i)}\,|\, F(x,x)\] for any $F\in Q_m(f)$.
\end{Lemma}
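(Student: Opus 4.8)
The plan is to induct on $m$. When $m=0$ we have $d_0(z)=0$ for every $z$, so the asserted divisor is $1$ and there is nothing to prove. Assume the lemma for $m-1$ and for every admissible twist, and fix $F\in Q_m(f)$ with $f(x)=\prod_{i=1}^k(x-a_i)^{b_i}$; discarding the indices with $b_i=0$ (which contribute only trivial factors), we may assume $k\ge1$ and all $b_i\neq0$ (if $f$ is constant the claim is vacuous). Write $\operatorname{dlog}(f)=p/q$ with $q(x)=\prod_{i=1}^k(x-a_i)$ and $p(x)=\sum_i b_i\prod_{j\neq i}(x-a_j)$; since $p(a_\ell)=b_\ell\prod_{j\neq\ell}(a_\ell-a_j)\neq0$ for each $\ell$, the polynomials $p$ and $q$ are coprime and $x-a_\ell$ does not divide $p$.

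Now apply Lemma~\ref{lem:ind}: the polynomials $G(x,y)=p(x)F_2(x,y)+q(x)F_{12}(x,y)$ and $H(x,y)=-p(y)F_1(x,y)+q(y)F_{12}(x,y)$ lie in $Q_{m-1}(f/q)$ and $Q_{m-1}(fq)$, where $f/q=\prod_i(x-a_i)^{b_i-1}$ and $fq=\prod_i(x-a_i)^{b_i+1}$, both admissible twists. By the inductive hypothesis, $\prod_i(x-a_i)^{d_{m-1}(b_i-1)}$ divides $G(x,x)$ and $\prod_i(x-a_i)^{d_{m-1}(b_i+1)}$ divides $H(x,x)$. Writing $\phi(x)=F(x,x)$, a direct computation on the diagonal gives $G(x,x)-H(x,x)=p(x)\bigl(F_1(x,x)+F_2(x,x)\bigr)=p(x)\phi'(x)$. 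Hence, setting $e_\ell:=\min\bigl(d_{m-1}(b_\ell-1),d_{m-1}(b_\ell+1)\bigr)$, the factor $(x-a_\ell)^{e_\ell}$ divides $p(x)\phi'(x)$, and since $x-a_\ell$ does not divide $p$, it divides $\phi'(x)$.

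It remains to gain one more power of $x-a_\ell$. Differentiating the quasi-invariance relation $f(x)F(x,y)-f(y)F(y,x)=(x-y)^{2m+1}g_a(x,y)$ in $x$ and then setting $x=y$ (legitimate since $m\ge1$, exactly as in the proof of Proposition~\ref{dlograt}) yields the polynomial identity $p(x)\phi(x)=q(x)\bigl(F_2(x,x)-F_1(x,x)\bigr)$; evaluating at $x=a_\ell$ and using $q(a_\ell)=0\neq p(a_\ell)$ gives $\phi(a_\ell)=0$. Together with $(x-a_\ell)^{e_\ell}\,|\,\phi'(x)$ this forces $(x-a_\ell)^{e_\ell+1}\,|\,\phi(x)$. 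A short case analysis on $b_\ell$ — distinguishing $b_\ell\notin\mathbb Z$, $b_\ell$ a positive integer, and $b_\ell$ a negative integer, with $b_\ell=\pm1$ as degenerate sub-cases — verifies the identity $d_m(b_\ell)=1+\min\bigl(d_{m-1}(b_\ell-1),d_{m-1}(b_\ell+1)\bigr)=e_\ell+1$, so $(x-a_\ell)^{d_m(b_\ell)}\,|\,F(x,x)$; since the $x-a_\ell$ are pairwise coprime, $\prod_i(x-a_i)^{d_m(b_i)}\,|\,F(x,x)$, closing the induction. The only genuinely delicate point is this last arithmetic identity, which must be checked against the truncation $\min(m,|z|)$ built into the definition of $d_m$ as $b_\ell$ is shifted by $\pm1$; the conceptual core is the observation that the diagonal value $G(x,x)-H(x,x)$ produced by Lemma~\ref{lem:ind} is precisely $p(x)\cdot\frac{d}{dx}F(x,x)$, which is what transfers the inductive hypothesis about $G$ and $H$ to the statement about $F$.
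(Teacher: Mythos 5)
Your proof is correct and follows the same overall strategy as the paper's: induct on $m$, apply Lemma~\ref{lem:ind} together with the inductive hypothesis to $G = pF_2 + qF_{12}$ and $H = -pF_1 + qF_{12}$, take the diagonal difference $G(x,x) - H(x,x) = p(x)\,\tfrac{\mathrm{d}}{\mathrm{d}x}F(x,x)$, divide out $p$ by coprimality, and gain one power of $(x-a_\ell)$. The one place you deviate --- and it is a genuine improvement --- is the anchoring of the final bootstrap. The paper deduces $(x-a_i)^{d_m(b_i)-1}\mid F(x,x)$ from the inductive hypothesis applied to $F\in Q_{m-1}(f)$, then asserts that if $(x-a_i)^s$ divides both $\phi:=F(x,x)$ and $\phi'$ then $(x-a_i)^{s+1}\mid\phi$; that implication is vacuous when $s=d_m(b_i)-1=0$, which is exactly what happens at $m=1$, so the paper's presentation leaves a small gap there. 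You instead obtain the anchor $\phi(a_\ell)=0$ directly from the differentiated quasi-invariance identity $p\phi = q\big(F_2(x,x)-F_1(x,x)\big)$ (the same identity used in the proof of Proposition~\ref{dlograt}), and then invoke the integration fact that $\phi(a)=0$ together with $(x-a)^e\mid\phi'$ forces $(x-a)^{e+1}\mid\phi$ for every $e\ge0$; this handles $m=1$ uniformly. You also record the exact identity $d_m(b_\ell)=1+\min\big(d_{m-1}(b_\ell-1),d_{m-1}(b_\ell+1)\big)$, where the paper uses only the inequality $d_m(b_i)-1\le d_{m-1}(b_i\pm1)$; both suffice, but your version makes the bookkeeping transparent. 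In short: same route, cleaner endpoint.
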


\begin{proof}
We proceed using induction, with the base case of $m=0$ clearly true. Recall that $\operatorname{dlog}(f(x))=\frac{p(x)}{q(x)}$. For $f(x)=\prod_{i=1}^k(x-a_i)^{b_i}$, we have that $p(x)=\sum_{i=1}^kb_i\prod_{j\ne i}(x-a_i)$ and $q(x)=\prod_{i=1}^k(x-a_i)$. It suffices to prove this divisibility for each $(x-a_i)^{d_m(b_i)}$. By the inductive hypothesis and Lemma~\ref{lem:ind}, we have that $(x-a_i)^{d_{m-1}(b_i-1)}\,|\, p(x)F_2(x,x)+q(x)F_{12}(x,x)$ and $(x-a_i)^{d_{m-1}(b_i+1)}\,|\,-p(x)F_1(x,x)+q(x)F_{12}(x,x)$. It is easy to see that $d_m(b_i)-1\le d_{m-1}(b_i),d_{m-1}(b_i-1),d_{m-1}(b_i+1)$.

Thus, $(x-a_i)^{d_m(b_i)-1}\,|\, (x-a_i)^{d_{m-1}(b_i)}\,|\, F(x,x)$ as $F$ is also in $Q_{m-1}(f)$. From the other two divisibilities we also obtain $(x-a_i)^{d_m(b_i)-1}\,|\, p(x)F_2(x,x)+q(x)F_{12}(x,x)$, and $-p(x)F_1(x,x)+q(x)F_{12}(x,x)$, so $(x-a_i)^{d_m(b_i)-1}\,|\, p(x)(F_1(x,x)+F_2(x,x))=p(x)\frac{{\rm d}F(x,x)}{{\rm d}x}(x,x)$. As $p$ and $q$ are relatively prime, we must have $(x-a_i)^{d_m(b_i)-1}\,|\,\frac{{\rm d}F(x,x)}{{\rm d}x}(x,x)$ which together with $(x-a_i)^{d_m(b_i)-1}\,|\, F(x,x)$ implies $(x-a_i)^{d_m(b_i)}\,|\, F(x,x)$, as desired.
\end{proof}

In fact, this lemma is sharp in the sense that there exists $F$ such that the divisibility becomes equality. To prove that, we utilize the following lemma:

\begin{Lemma}\label{prop:prod}
If $F\in Q_m(f)$ and $G\in Q_m(g)$, then $FG\in Q_m(fg)$.
\end{Lemma}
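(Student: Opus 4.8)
The plan is to unwind the definitions and reduce the claim $FG\in Q_m(fg)$ to the hypotheses $F\in Q_m(f)$ and $G\in Q_m(g)$ by a direct algebraic manipulation of the quasi-invariance conditions, localized at the divisor $x-y$ (since in the $n=2$ setting the only relevant reflection is $s_{1,2}$, and for general $n$ one works one pair $(i,j)$ at a time). Writing $x=x_i$, $y=x_j$ and treating the remaining variables as inert, the hypothesis $F\in Q_m(f)$ says $f(x)F(x,y)-f(y)F(y,x)=(x-y)^{2m+1}g_a(x,y)$ for some $g_a$ smooth on $D$, and similarly $g(x)G(x,y)-g(y)G(y,x)=(x-y)^{2m+1}g_b(x,y)$. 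The goal is to produce a smooth $g_{ab}$ with $(fg)(x)(FG)(x,y)-(fg)(y)(FG)(y,x)=(x-y)^{2m+1}g_{ab}(x,y)$.

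First I would write $u(x,y)=f(x)F(x,y)$ and $v(x,y)=g(x)G(x,y)$, so the hypotheses read $u(x,y)-u(y,x)=(x-y)^{2m+1}g_a$ and $v(x,y)-v(y,x)=(x-y)^{2m+1}g_b$, and the desired conclusion is $u(x,y)v(x,y)-u(y,x)v(y,x)=(x-y)^{2m+1}g_{ab}$. Then I would use the telescoping identity
\[
u(x,y)v(x,y)-u(y,x)v(y,x)=\bigl(u(x,y)-u(y,x)\bigr)v(x,y)+u(y,x)\bigl(v(x,y)-v(y,x)\bigr).
\]
Substituting the two hypotheses, the right-hand side becomes $(x-y)^{2m+1}\bigl(g_a\,v(x,y)+u(y,x)\,g_b\bigr)$, so we may take $g_{ab}=g_a\,v(x,y)+u(y,x)\,g_b=g_a\,g(x)G(x,y)+f(y)F(y,x)\,g_b$. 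The factor $g(x)G(x,y)$ is a polynomial times a smooth function on $D$ (since $g$ is smooth on its domain and $D$ is the common domain), hence smooth on $D$; likewise $f(y)F(y,x)$ is smooth on $D$; and $g_a$, $g_b$ are smooth on $D$ by hypothesis. Therefore $g_{ab}$ is smooth on $D$, which is exactly the statement that $FG\in Q_m(fg)$.

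There is essentially no hard step here — the only point requiring a small amount of care is the bookkeeping about domains: one must check that the "smoothness on $D$" used for $f$, $g$, $F$, $G$ and the error terms is all with respect to the \emph{same} domain $D$, namely the common domain of $f_1\cdots f_n$ and its inverse (for $n=2$, of $f=f_1/f_2$ and its inverse), so that products of smooth functions remain smooth. Once that is observed, the telescoping identity does all the work, and the same argument applies verbatim to each pair $(i,j)$ when $n>2$. Thus the main (and only) obstacle is notational rather than mathematical.
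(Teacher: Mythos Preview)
Your proof is correct and is essentially identical to the paper's: both use the same telescoping identity $uv-u'v'=(u-u')v+u'(v-v')$ with $u=f(x)F(x,y)$ and $v=g(x)G(x,y)$, and then observe that each summand is $(x-y)^{2m+1}$ times a smooth function. Your additional remark about the common domain $D$ is a welcome clarification but does not change the argument.
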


\begin{proof}
We have that
\begin{gather*}
\frac{f(x)g(x)F(x,y)G(x,y)-f(y)g(y)F(y,x)G(y,x)}{(x-y)^{2m+1}}\\
\qquad{} =g(x)G(x,y) \frac{f(x)F(x,y)-f(y)F(y,x)}{(x-y)^{2m+1}} +f(y)F(y,x) \frac{g(x)G(x,y)-g(y)G(y,x)}{(x-y)^{2m+1}}
\end{gather*} is smooth.
\end{proof}

\begin{Lemma}\label{lem:gen}
There exists $P_m\in Q_m(f)$ with \[P_m(x,x)=\prod_{i=1}^k(x-a_i)^{d_m(b_i)}.\]
\end{Lemma}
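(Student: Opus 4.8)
The plan is to reduce, via the product property just established, to the case of a single monomial twist, and then to write down a generator explicitly. By Lemma~\ref{prop:prod} it suffices to produce, for each $i$, a polynomial $P_m^{(i)}\in Q_m\big((x-a_i)^{b_i}\big)$ with $P_m^{(i)}(x,x)=(x-a_i)^{d_m(b_i)}$; the product $P_m:=\prod_iP_m^{(i)}$ then lies in $Q_m(f)$ and restricts on the diagonal to $\prod_i(x-a_i)^{d_m(b_i)}$. After the change of variables $x\mapsto x+a_i$, $y\mapsto y+a_i$, which fixes $x-y$ and hence the quasi-invariance condition, the problem becomes: given $b\in\mathbb C$, find a polynomial $P_m\in Q_m(x^b)$ with $P_m(x,x)=x^{d_m(b)}$, where $Q_m(x^b)$ denotes the set of polynomials $F$ for which $\big(x^bF(x,y)-y^bF(y,x)\big)/(x-y)^{2m+1}$ is smooth on $\{x\ne0\}$.

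I would then distinguish two cases. If $b$ is an integer with $|b|\le m$, so that $d_m(b)=|b|$, I expect the pure monomial $P_m(x,y)=y^b$ (when $b\ge0$) or $P_m(x,y)=x^{-b}$ (when $b<0$) to work directly: the twisted difference $x^bP_m(x,y)-y^bP_m(y,x)$ vanishes identically (e.g.\ $x^by^b-y^bx^b=0$), hence is divisible to all orders, while $P_m(x,x)=x^{|b|}$. If instead $b\notin\mathbb Z$ or $|b|>m$, so that $d_m(b)=m$, I would look for $P_m$ homogeneous of degree $m$, writing $P_m(x,y)=\sum_{j=0}^mc_jx^{m-j}y^j$ normalized by $\sum_jc_j=1$ (which forces $P_m(x,x)=x^m$). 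Setting $t=y/x$ and dividing the twisted difference by $x^{b+m}$ turns the quasi-invariance condition into the requirement that $\Psi(t):=\sum_jc_j\big(t^j-t^{b+m-j}\big)$ vanish to order $2m+1$ at $t=1$; since $\Psi(1/t)=-t^{-(b+m)}\Psi(t)$, vanishing to order $2m$ already suffices, so this is an inhomogeneous $(m+1)\times(m+1)$ linear system in $(c_0,\dots,c_m)$ — equivalently, $\sum_jc_j\lambda_j^{2l+1}=0$ for $l=0,\dots,m-1$ together with $\sum_jc_j=1$, where $\lambda_j=j-\tfrac{b+m}{2}$ — whose solution is, after the substitution $t=\tfrac{1+z}{1-z}$, a suitably rescaled classical Jacobi polynomial.

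The crux is to show that this system is uniquely solvable exactly when $d_m(b)=m$: equivalently, that no nonzero polynomial of the shape ``a constant plus odd powers, of degree at most $2m-1$'' vanishes at all of $\lambda_0,\dots,\lambda_m$. One checks that $\{\lambda_j\}$ is disjoint from $\{-\lambda_j\}$ and avoids $0$ precisely when $b$ is not an integer with $|b|\le m$ — indeed $\lambda_j=-\lambda_{j'}$ forces $j+j'=b+m\in\{0,\dots,2m\}$ and $\lambda_j=0$ forces $b+m\in\{0,2,\dots,2m\}$ — and this should force the odd part of any such polynomial to vanish by a root count and then the constant part to vanish as well; alternatively, one invokes the classical nonvanishing of the Jacobi polynomial's endpoint value $\tfrac{(\alpha+1)_m}{m!}$ outside the exceptional integer range, which is exactly the range where the monomial construction was used. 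Once $P_m$ is in hand in both cases, reassembling the factors through Lemma~\ref{prop:prod} completes the proof. I expect this non-degeneracy step — which must be handled uniformly over irrational $b$ and over large-integer $b$, where the stray constant term makes a bare root count slightly delicate — to be the only genuine obstacle; the rest is bookkeeping.
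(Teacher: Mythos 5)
Your reduction to $k=1$ via Lemma~\ref{prop:prod}, the translation $x\mapsto x+a_i$, and the treatment of the small-integer case with monomials are all correct and match the paper's strategy (you handle the sign of $b$ a bit more cleanly than the paper does). Your reformulation of the generic case is also right: writing $P_m=\sum_j c_jx^{m-j}y^j$ with $\sum_jc_j=1$, passing to $t=y/x$, and using the antisymmetry $\Psi(1/t)=-t^{-(b+m)}\Psi(t)$ to reduce to order $2m$ correctly turns quasi-invariance into the odd-moment system $\sum_jc_j\lambda_j^{2\ell+1}=0$, $\ell=0,\dots,m-1$.

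The gap is exactly where you flag it, and it is a real one. The degeneracy of your $(m+1)\times(m+1)$ matrix is equivalent to the existence of a nonzero $p(\lambda)=\alpha_0+q(\lambda)$ with $q$ odd, $\deg p\le 2m-1$, vanishing at $\lambda_0,\dots,\lambda_m$. Your root count disposes of $\alpha_0=0$ (then $p$ is odd, picks up $\pm\lambda_j$ and $0$, giving $2m+3$ roots). For $\alpha_0\ne0$ the count does not close: $p$ has degree up to $2m-1$ and you are only handed $m+1$ roots, and the Rolle-type argument on the even polynomial $p'$ (at least $m$ critical points between the $\lambda_j$'s, reflected to $m$ more) fails precisely when $0<\tfrac{b+m}{2}<m$, i.e.\ when the nodes $\lambda_j$ straddle the origin and the two intervals $[\lambda_0,\lambda_m]$ and $[-\lambda_m,-\lambda_0]$ overlap, so the reflected critical points need not be new. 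That regime is not an edge case — for non-integer $b$ of small modulus it is the generic situation. The alternative you mention, ``invoke the Jacobi endpoint nonvanishing,'' does not rescue the argument as stated: nonvanishing of $P_m^{(\alpha,\beta)}(1)$ only tells you the candidate solution has $\sum_jc_j\ne0$, but you would first have to prove that the Jacobi polynomial actually satisfies your $m$ moment equations, and at that point you are back to verifying an explicit formula — which is precisely what the paper does. The paper writes $P_m(x,y)=\binom{2m}{m}^{-1}\sum_i\binom{m-z}{i}\binom{m+z}{m-i}x^iy^{m-i}$, checks $P_m(x,x)=x^m$ by Vandermonde, and then proves the vanishing to order $2m+1$ by a clean induction: it shows the twisted difference $H$ vanishes to order $2$ directly, shows $\partial_x\partial_yH$ is a scalar multiple of the analogous expression for $(m-1,z-1)$ hence vanishes to order $2m-1$, and recovers order $2m+1$ for $H$ by integrating back. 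So the overall architecture of your proof is the same as the paper's, but the piece you leave as ``the only genuine obstacle'' is in fact the entire content of the lemma, and your sketch does not yet supply it.
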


\begin{proof}
Note that by Lemma~\ref{prop:prod} it suffices to show this when $k=1$ as for $k>1$ we can take the product of all such $P_m$ in $Q_m\big((x-a_1)^{d_m(b_1)}\big),\dots,Q_m\big((x-a_k)^{d_m(b_k)}\big)$. Shifting, we may also assume that $a_1=0$. Now, note that if $z=b_1$ is an integer less than $m$, we can simply take $P_m=y^{z}$. Otherwise, we claim that we can take \[P_m(x,y)=\frac{\sum\limits_{i=0}^m\binom{m-z}i\binom{m+z}{m-i}x^iy^{m-i}}{\binom{2m}m}.\]
Indeed, note that we have that $P_m(x,x)=x^m$ by Vandermonde's identity, so it suffices to show that $P_m$, or equivalently the numerator of $P_m$, is in $Q_m$. We proceed using induction, with the base case of $m=0$ obvious. For the inductive step, note that we wish to show that \[H(x,y):=\sum_{i=0}^m\binom{m-z}i\binom{m+z}{m-i}\big(x^{i+z}y^{m-i}-x^{m-i}y^{i+z}\big)\] vanishes at $x=y$ to order $2m+1$.

It is easy to see that $H(x,y)$ vanishes at $x=y$. Let us first show that $H(x,y)$ vanishes at $x=y$ to order $2$.
Differentiating with respect to $x$ and setting $x=y$, we would like to show that \[\sum_{i=0}^m\binom{m-z}i\binom{m+z}{m-i}(2i-m+z)=0.\]
As we have \[\sum_{i=0}^m\binom{m-z}i\binom{m+z}{m-i}i=\sum_{i=1}^m(m-z)\binom{m-z-1}{i-1}\binom{m+z}{m-i}=(m-z)\binom{2m-1}{m-1}\] and \begin{gather*}
\sum_{i=0}^m\binom{m-z}i\binom{m+z}{m-i}(i-m)=-\sum_{i=0}^{m-1}(m+z)\binom{m-z}i\binom{m+z-1}{m-i-1}\\
\hphantom{\sum_{i=0}^m\binom{m-z}i\binom{m+z}{m-i}(i-m)}{} =-(m+z)\binom{2m-1}{m-1}
\end{gather*} by Vandermonde's identity, the expression reduces to \[-2z\binom{2m-1}{m-1}+\sum_{i=0}^mz\binom{m-z}i\binom{m+z}{m-i}=-2z\binom{2m-1}{m-1}+z\binom{2m}m=0\] as desired.

Secondly, let us show that $\frac{\partial^2 H}{\partial y \partial x}$ vanishes at $x = y$ to order $2m-1$. Differentiating by both~$x$ and~$y$, it suffices to show that \[\sum_{i=0}^m\binom{m-z}i\binom{m+z}{m-i}(i+z)(m-i)\big(x^{i+z-1}y^{m-i-1}-x^{m-i-1}y^{i+z-1}\big)\] vanishes at $x=y$ to order $2m-1$. But note that this expression is
\begin{gather*}
\sum_{i=0}^{m-1}\binom{m-z}i\binom{m+z-1}{m-1-i}(i+z)(m+z)\big(x^{i+z-1}y^{m-i-1}-x^{m-i-1}y^{i+z-1}\big)\\
\qquad{} =(m+z)\sum_{i=0}^{m-1}\binom{m-z}i\binom{m+z-2}{m-1-i}(m+z-1)\big(x^{i+z-1}y^{m-i-1}-x^{m-i-1}y^{i+z-1}\big)\\
\qquad{} =(m+z)(m+z-1)\sum_{i=0}^{m-1}\binom{m-z}i\binom{m+z-2}{m-1-i}\big(x^{i+z-1}y^{m-i-1}-x^{m-i-1}y^{i+z-1}\big),
\end{gather*}
which vanishes at $x=y$ to order $2m-1$ by the inductive hypothesis on $Q_{m-1}(x^{z-1})$, as desired.

Thus we have seen that $H(x,y)$ vanishes at $x=y$ to order $2$ and \[\frac{\partial^2 H}{\partial y \partial x}=(x-y)^{2m-1}K(x,y)\] for certain polynomial $K(x,y)$. Since $H(x,y)$ vanishes at $x=y$ to order $2$, the solution of the above equation is unique. We can use the integration by parts to get an expression of the polynomial $H$ in terms of $(x-y)^k$, $k\ge 2m+1$, and the derivatives of $K(x,y)$ (the constant of integration is chosen to be zero). In this way one checks that $H(x,y)$ vanishes at $x=y$ to order $2m+1$.
\end{proof}

\begin{Lemma}\label{lem:indu}Let $R$ denote the ring of symmetric polynomials in $x$ and $y$. Then, for all $m>0$ we have that \[Q_m=RP_m+(x-y)^2Q_{m-1}.\]
\end{Lemma}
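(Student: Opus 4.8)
The plan is to prove the two inclusions separately, the inclusion $RP_m + (x-y)^2 Q_{m-1} \subseteq Q_m$ being routine and the reverse one carrying the content.

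For the easy half, I would observe that since $(x-y)^2$ is symmetric, the alternating difference $f(x)(x-y)^2 G(x,y) - f(y)(x-y)^2 G(y,x)$ factors as $(x-y)^2$ times $f(x)G(x,y) - f(y)G(y,x)$; dividing by $(x-y)^{2m+1} = (x-y)^2(x-y)^{2m-1}$ then shows $(x-y)^2 G \in Q_m$ exactly when $G \in Q_{m-1} = Q_{m-1}(f)$. Likewise a symmetric $r \in R$ lies in $Q_m(1)$ trivially, so $rP_m = r \cdot P_m \in Q_m(1 \cdot f) = Q_m$ by Lemma~\ref{prop:prod}, using $P_m \in Q_m(f)$ from Lemma~\ref{lem:gen}. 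In particular $Q_m$ is an $R$-module, which I will use below.

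For the reverse inclusion, the idea is that Lemma~\ref{lem:div} lets one subtract a suitable $R$-multiple of $P_m$ off a given $F$ so as to land on the diagonal-vanishing locus, after which Lemma~\ref{prop:squ} finishes the job. Concretely: given $F \in Q_m$, Lemma~\ref{lem:div} shows that $P_m(x,x) = \prod_{i=1}^k(x-a_i)^{d_m(b_i)}$ divides $F(x,x)$ in $\mathbb{C}[x]$, so $F(x,x) = r_0(x)P_m(x,x)$ for some $r_0 \in \mathbb{C}[x]$. Lifting $r_0$ to a symmetric polynomial, for instance $r(x,y) := r_0\big(\frac{x+y}{2}\big) \in R$, which restricts to $r_0$ on the diagonal, I would set $H := F - rP_m$. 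Then $H \in Q_m$, and $H(x,x) = F(x,x) - r_0(x)P_m(x,x) = 0$, so $x-y$ divides $H$. Applying Lemma~\ref{prop:squ} with $k = m-1$ (so that its exponent $2(k+1)+1$ equals $2m+1$), from the divisibility $x-y \mid H$ and the smoothness of $\frac{f(x)H(x,y)-f(y)H(y,x)}{(x-y)^{2m+1}}$ one obtains $(x-y)^2 \mid H$ and $G := \frac{H}{(x-y)^2} \in Q_{m-1}$. Hence $F = rP_m + (x-y)^2 G \in RP_m + (x-y)^2 Q_{m-1}$, as desired.

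I expect the only non-formal step to be this last one: the passage from the diagonal vanishing of $H$ to the membership $H \in (x-y)^2 Q_{m-1}$ is not bare polynomial divisibility — it must simultaneously promote the vanishing of $H$ at $x = y$ to order two and preserve the quasi-invariance order — and it is precisely this that the diagonal-evaluation step inside the proof of Lemma~\ref{prop:squ} accomplishes. The remaining ingredients, namely the divisibility of $F(x,x)$ by $P_m(x,x)$, the existence of a symmetric lift of $r_0$ (the restriction map $R \to \mathbb{C}[x]$ is surjective since it sends $\frac{x+y}{2}$ to $x$), and the $R$-module structure of $Q_m$, are all immediate from what has already been established.
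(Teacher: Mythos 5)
Your argument matches the paper's proof essentially verbatim: use Lemma~\ref{lem:div} to divide $F(x,x)$ by $P_m(x,x)$, lift the quotient to the symmetric polynomial $r_0\bigl(\frac{x+y}{2}\bigr)$, subtract $r P_m$ to get diagonal vanishing, and apply Lemma~\ref{prop:squ}. The only addition is that you spell out the easy inclusion $RP_m + (x-y)^2 Q_{m-1} \subseteq Q_m$, which the paper leaves implicit.
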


\begin{proof}
Let $F(x,y)$ be an element of $Q_m$. By Lemma~\ref{lem:div}, $P_m(x,x)\,|\, F(x,x)$, so there exists a~polynomial $g\in\mathbb C[x]$ with $P_m(x,x)g(x)=F(x,x)$. Now, consider the polynomial \[F'(x,y)=F(x,y)-P_m(x,y)g\left(\frac{x+y}2\right),\] which is in $Q_m$ as $F,P_m\in Q_m$ and $g\left(\frac{x+y}2\right)\in R$. But now note that $F'(x,x)=F(x,x)-P_m(x,x)g(x)=0$, so by Lemma~\ref{prop:squ}, $F'\in (x-y)^2Q_{m-1}$, which immediately implies the desi\-red.
\end{proof}

\begin{Corollary}
We have that \[Q_m=RP_m+R(x-y)^2P_{m-1}+\cdots+R(x-y)^{2m-2}P_1+(x-y)^{2m}Q_0\] for all $m$.
\end{Corollary}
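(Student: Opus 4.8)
The plan is to prove this by induction on $m$, using Lemma~\ref{lem:indu} as the engine. The base case $m=0$ is the tautology $Q_0 = (x-y)^0 Q_0$, so nothing is needed there. For the inductive step, I would assume the claimed decomposition holds for $m-1$, namely
\[
Q_{m-1}=RP_{m-1}+R(x-y)^2P_{m-2}+\cdots+R(x-y)^{2m-4}P_1+(x-y)^{2m-2}Q_0,
\]
and then apply Lemma~\ref{lem:indu}, which gives $Q_m = RP_m + (x-y)^2 Q_{m-1}$.

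Substituting the inductive hypothesis for $Q_{m-1}$ into this identity, I would distribute the factor $(x-y)^2$ across each summand. The only point worth spelling out is that $(x-y)^2$ is itself a symmetric polynomial, hence lies in $R$, so that $(x-y)^2\cdot R(x-y)^{2j}P_{m-1-j} = R(x-y)^{2j+2}P_{m-1-j}$ as submodules; the same observation handles the final term, $(x-y)^2\cdot(x-y)^{2m-2}Q_0 = (x-y)^{2m}Q_0$. After relabeling, the right-hand side becomes exactly
\[
RP_m+R(x-y)^2P_{m-1}+R(x-y)^4P_{m-2}+\cdots+R(x-y)^{2m-2}P_1+(x-y)^{2m}Q_0,
\]
which is the desired decomposition for $m$.

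There is no real obstacle here: the corollary is a purely formal consequence of iterating Lemma~\ref{lem:indu}, and the sole thing to be careful about is the bookkeeping that $R$-multiplication and multiplication by the symmetric polynomial $(x-y)^2$ can be freely composed and reordered. If one wanted to be slightly more explicit, one could also phrase the argument non-inductively by noting that repeated application of Lemma~\ref{lem:indu} peels off one generator $P_{m-j}$ (up to an $R$-multiple and a factor of $(x-y)^2$) at each stage, terminating after $m$ steps with the residual term $(x-y)^{2m}Q_0$.
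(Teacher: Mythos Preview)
Your proof is correct and is exactly the intended argument: the paper states the Corollary without proof because it follows immediately by iterating Lemma~\ref{lem:indu}, which is precisely the induction you spell out. There is nothing to add; your observation that $(x-y)^2\in R$ is the only point needed to make the bookkeeping go through.
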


Now, we are finally ready to prove our main result of this section. Recall that $d_m(f)=\sum_{i=1}^kd_m(b_i)$ where $f(x)=\prod_{i=1}^k(x-a_i)^{b_i}$ and $d_m(z)=\min(m,|z|)$ if $z\in\mathbb Z$ and $d_m(z)=m$ otherwise.

\begin{Theorem}\label{thm:main}
The Hilbert series for $Q_m(f)$ is \[\dfrac{t^{2m}+t^{2m+1}+\sum_{i=1}^m t^{2(m-i)+d_i(f)}-\sum_{i=1}^m t^{2(m-i)+d_i(f)+2}}{(1-t)(1-t^2)}.\]
\end{Theorem}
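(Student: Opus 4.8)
The plan is to set up a short exact sequence relating $Q_m(f)$, $Q_{m-1}(f)$ and a polynomial ring in one variable, deduce from it a one-step recursion for $H_m(t)$, and then solve the recursion. The tool is the \emph{evaluation-on-the-diagonal} homomorphism
\[ \mathrm{ev}\colon\ Q_m(f)\longrightarrow\mathbb{C}[x],\qquad F(x,y)\longmapsto F(x,x), \]
which is $R$-linear and does not raise degree. Its kernel is exactly $(x-y)^2Q_{m-1}(f)$ and its image is exactly $q_m(x)\,\mathbb{C}[x]$, where $q_m(x)=\prod_{i=1}^k(x-a_i)^{d_m(b_i)}$. For the kernel: $(x-y)^2$ is symmetric, so $(x-y)^2Q_{m-1}(f)\subseteq Q_m(f)$ and such elements visibly die under $\mathrm{ev}$; conversely, if $F\in Q_m(f)$ with $F(x,x)=0$ then $(x-y)\mid F$, whence Lemma~\ref{prop:squ} applied with $k=m-1$ gives $(x-y)^2\mid F$ and $F/(x-y)^2\in Q_{m-1}(f)$, i.e.\ $F\in(x-y)^2Q_{m-1}(f)$. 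For the image: containment in $q_m(x)\,\mathbb{C}[x]$ is exactly Lemma~\ref{lem:div}, and surjectivity onto it follows from Lemma~\ref{lem:gen}, since every $h\in\mathbb{C}[x]$ equals $\tilde h(x,x)$ for a symmetric $\tilde h\in R$ of degree $\le\deg h$ (take $\tilde h$ a polynomial in $\frac{x+y}{2}$), and then $P_m\tilde h\in Q_m(f)$ satisfies $(P_m\tilde h)(x,x)=q_m(x)h(x)$.

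Thus for every $m\ge1$ there is a short exact sequence of $R$-modules
\[ 0\longrightarrow(x-y)^2Q_{m-1}(f)\longrightarrow Q_m(f)\stackrel{\mathrm{ev}}{\longrightarrow}q_m(x)\,\mathbb{C}[x]\longrightarrow0. \]
Since $f$ need not be homogeneous, $Q_m(f)$ is in general only a \emph{filtered} submodule of $\mathbb{C}[x,y]$ and $H_m(t)$ is the Hilbert series of the associated graded; the point to check is that this sequence respects the degree filtration, namely that the filtration induced on $(x-y)^2Q_{m-1}(f)$ is that of $Q_{m-1}(f)$ shifted up by $2$, and that the filtration induced on the quotient corresponds to the degree filtration of $q_m(x)\,\mathbb{C}[x]\subseteq\mathbb{C}[x]$ (here one uses the bound $\deg\tilde h\le\deg h$ above). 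Granting this, multiplication by $(x-y)^2$ shifts Hilbert series by $t^2$ while $q_m(x)\,\mathbb{C}[x]$ has Hilbert series $t^{d_m(f)}/(1-t)$ (its lowest-degree nonzero element is $q_m$, of degree $d_m(f)$), so additivity of Hilbert series along the sequence yields
\[ H_m(t)=t^2\,H_{m-1}(t)+\frac{t^{d_m(f)}}{1-t},\qquad m\ge1, \]
with base case $Q_0(f)=\mathbb{C}[x,y]$, i.e.\ $H_0(t)=\dfrac{1}{(1-t)^2}=\dfrac{1+t}{(1-t)(1-t^2)}$.

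It then remains to check by induction that this recursion produces the closed form in the statement. Writing $\dfrac{t^{d_m(f)}}{1-t}=\dfrac{t^{d_m(f)}-t^{d_m(f)+2}}{(1-t)(1-t^2)}$ and multiplying the numerator of $H_{m-1}(t)$ by $t^2$, the monomials $t^{2(m-1)}$ and $t^{2(m-1)+1}$ become $t^{2m}$ and $t^{2m+1}$, the exponents $2((m-1)-i)+d_i(f)$ become $2(m-i)+d_i(f)$, and appending the $i=m$ term $t^{d_m(f)}-t^{d_m(f)+2}=t^{2(m-m)+d_m(f)}-t^{2(m-m)+d_m(f)+2}$ extends both sums from $\sum_{i=1}^{m-1}$ to $\sum_{i=1}^{m}$, giving exactly the stated numerator. (Sanity check: when $f$ is constant all $d_i(f)=0$, the two sums telescope, and one recovers $\frac{1+t^{2m+1}}{(1-t)(1-t^2)}$, the $n=2$ untwisted Hilbert series.) The real content is carried entirely by Lemmas~\ref{prop:squ}, \ref{lem:div} and~\ref{lem:gen}; the only step beyond bookkeeping is the verification that the short exact sequence is compatible with the degree filtration, which I expect to be the main — though relatively modest — obstacle.
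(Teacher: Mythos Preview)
Your proof is correct and rests on the same three lemmas as the paper (Lemmas~\ref{prop:squ}, \ref{lem:div}, \ref{lem:gen}); your short exact sequence is precisely the content of the paper's Lemma~\ref{lem:indu}, rephrased. Where you diverge is in the bookkeeping: rather than unwinding the recursion $Q_m=RP_m+(x-y)^2Q_{m-1}$ into an explicit list of $m+2$ generators $g_m,g_{m,0},\dots,g_{m,m}$ and $m$ independent relations among them (as the paper does by a separate induction), you read off the Hilbert-series recursion $H_m(t)=t^2H_{m-1}(t)+t^{d_m(f)}/(1-t)$ directly from additivity along the exact sequence and then solve it. This is more streamlined for the stated goal, and your filtration-compatibility worry is easily dispatched: the explicit $P_m$ constructed in Lemma~\ref{lem:gen} has degree exactly $d_m(f)$, so $\deg(P_m\tilde h)=d_m(f)+\deg h$ gives the surjectivity-with-degree-bound you need, while the kernel side is immediate since $(x-y)^2$ is homogeneous. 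The paper's longer route does buy something you do not get, namely an explicit $R$-module presentation of $Q_m(f)$ rather than just its Hilbert series.
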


\begin{proof}
 Note that $Q_0=\mathbb C[x,y]$, which is generated by $P_0=1$ and $x-y$ (as an $R$ modules). By the corollary of Lemma~\ref{lem:indu}, $Q_m$ is generated by \[P_m, \ (x-y)^2P_{m-1}, \ \dots, \ (x-y)^{2m-2}P_1, \ (x-y)^{2m}, \ (x-y)^{2m+1}.\]
Let $g_{m,i}=(x-y)^{2(m-i)}P_i$ and $g_m=(x-y)^{2m+1}$. We claim that $Q_m$ is generated by $g_m, g_{m,0}, \dots, g_{m,m}$ and $m$ independent relations of the form
\begin{gather*}
(x-y)^2g_{m,m}=r_{m,m-1}g_{m,m-1}+\cdots+r_{m,0}g_{m,0}+r_mg_m,\\
 (x-y)^2g_{m,m-1}=r_{m-1,m-2}g_{m,m-2}+\cdots+r_{m-1,0}g_{m,0}+r_{m-1}g_m,\\
 \cdots\cdots\cdots\cdots\cdots\cdots\cdots\cdots\cdots\cdots\\
 (x-y)^2g_{m,1}=r_{1,0}g_{m,0}+r_1g_m
 \end{gather*}
for some $r_i,r_{i,j}\in R$. We proceed using induction, noting that $Q_0$ is generated by $1$ and $x-y$ with no relations. For the inductive step, first note that as $g_{m,m}=P_m\in Q_m\subset Q_{m-1}$, there exist $p, p_0, \dots, p_{m-1}\in R$ with $g_{m,m}=pg_{m-1}+p_0g_{m-1,0}+\dots+p_{m-1}g_{m-1,m-1}$. This yields a relation in the form of the first relation above by setting $r_m=p, r_{m,0}=p_0, \dots, r_{m,m-1}=p_{m-1}$. This equation is true as $g_m=(x-y)^2g_{m-1}, g_{m,0}=(x-y)^2g_{m-1,0}, \dots, g_{m,m-1}=(x-y)^2g_{m-1,m-1}$ by definition. Now, suppose that $q, q_0, \dots, q_m\in R$ such that $qg_m+q_0g_{m,0}+\dots+q_mg_{m,m}=0$. Then, as $(x-y)^2\,|\, g_m,g_{m,0},\dots,g_{m,m-1}$ and $(x-y)^2\nmid g_{m,m}$, we must have that $(x-y)^2\,|\, q_m$. Let $q_m=(x-y)^2q_m'$. Then, subtracting $q_m'$ times the first relation from $qg_m+q_0g_{m,0}+\dots+q_mg_{m,m}=0$, we obtain a relation of the form $q'g_m+q_0'g_{m,0}+\dots+q_{m-1}'g_{m,m-1}=0$ with $q', q_0', \dots, q_{m-1}'\in R$. Note that this relation is uniquely determined by the first generating relation we have so the first generating relation is independent of the rest of the relations. Furthermore, this relation is $(x-y)^2$ times a relation among the generators of $Q_{m-1}$. By the inductive hypothesis, such a relation is generated by $(x-y)^2$ times the $m-1$ independent generating relations of $Q_{m-1}$, which are by definition the last $m-1$ generating relations on our list. Hence, $Q_m$ is generated by those $m+2$ elements and $m$ independent relations among those elements, as desired.

For the Hilbert polynomial, note that the generators have degrees \[d_m(f),\ 2+d_{m-1}(f),\ \dots,\ 2m-2+d_1(f),\ 2m,\ 2m+1\] and that the independent relations have degrees \[2+d_m(f),\ 4+d_{m-1}(f),\ \dots,\ 2m+d_1(f),\] which gives the Hilbert polynomial and series exactly as described in the theorem.
\end{proof}

\section{Future prospects}\label{sec:fut}
It would be interesting to study Conjectures~\ref{conj:mn},~\ref{conj:mainc} and~\ref{conj:big}. As with our current results, we expect to make extensive use of computer programs to discover key properties of quasi-invariant polynomials and their Hilbert series. We expect that resolving Conjecture~\ref{conj:mn} will require studying the modular representation theory of~$S_n$.

A possible approach to Conjecture~\ref{conj:big} is to adapt the approach of the authors of \cite{braverman2016cyclotomic}, namely to construct a Cherednik-like algebra related to $f_1,\dots,f_n$ and the quasi-invariant polynomials. Along the way, one may also find the formula of the Hilbert series.

Finally, it would be interesting to study $q$-deformations of the spaces of twisted quasi-invariant polynomials. In~\cite{braverman2016cyclotomic}, Braverman, Etingof and Finkelberg study $q$-deformations of their special case and show that when $Q_m$ is free, its $q$-deformation is a flat deformation. They conjecture that it is a flat deformation in general even when $Q_m$ is not a free module. Here, $Q_{m,q}(f_1,\dots,f_n)$ is defined as the set of polynomials $F$ for which \[\frac{(1-s_{i,j})(f_1(x_1)\cdots f_n(x_n)F)}{\prod\limits_{k=-m}^m\big(x_i-q^kx_j\big)}\] is a smooth function for all $1\le i<j\le n$. It would be interesting to resolve this in the case that Braverman, Etingof, and Finkelberg consider, as well as the general case we have presented. We believe that $q$-analogues of some of our results hold. For example, the $q$-analogue of Proposition~\ref{dlograt} would be that $Q_{m,q}\subset\prod_{k=-m}^m\big(x_i-q^kx_j\big)$ if $\frac{h_{i,j}(qx)}{h_{i,j}(x)}$ is not rational.

\subsection*{Acknowledgements}
We would like to thank MIT PRIMES, specifically Pavel Etingof, for suggesting the project. We would like to thank Eric Rains for very useful discussions. We also would like to thank the referees for carefully reading our manuscript and for their valuable comments and suggestions which substantially help to improve the readability and quality of the paper.

\pdfbookmark[1]{References}{ref}
\LastPageEnding


\begin{thebibliography}{99}
\footnotesize\itemsep=0pt

\bibitem{berest2011quasi}
Berest Yu., Chalykh O., Quasi-invariants of complex reflection groups,
 \href{https://doi.org/10.1112/S0010437X10005063}{\textit{Compos. Math.}} \textbf{147} (2011), 965--1002, \href{https://arxiv.org/abs/0912.4518}{arXiv:0912.4518}.

\bibitem{braverman2016cyclotomic}
Braverman A., Etingof P., Finkelberg M., Cyclotomic double affine Hecke
 algebras (with an appendix by Hiraku Nakajima and Daisuke Yamakawa),
 \href{https://arxiv.org/abs/1611.10216}{arXiv:1611.10216}.

\bibitem{calogero1971solution}
Calogero F., Solution of the one-dimensional {$N$}-body problems with quadratic
 and/or inversely quadratic pair potentials, \href{https://doi.org/10.1063/1.1665604}{\textit{J.~Math. Phys.}}
 \textbf{12} (1971), 419--436.

\bibitem{CV}
Chalykh O.A., Veselov A.P., Commutative rings of partial differential operators
 and {L}ie algebras, \href{https://doi.org/10.1007/BF02125702}{\textit{Comm. Math. Phys.}} \textbf{126} (1990), 597--611.

\bibitem{Eisenbud}
Eisenbud D., Commutative algebra with a view toward algebraic geometry,
 \textit{Graduate Texts in Mathematics}, Vol.~150, \href{https://doi.org/10.1007/978-1-4612-5350-1}{Springer-Verlag}, New York,
 1995.

\bibitem{etingof2006lectures}
Etingof P., Lectures on Calogero--Moser systems, \href{https://arxiv.org/abs/math.QA/0606233}{arXiv:math.QA/0606233}.

\bibitem{feigin2002quasi}
Feigin M., Veselov A.P., Quasi-invariants of {C}oxeter groups and
 {$m$}-harmonic polynomials, \href{https://doi.org/10.1155/S1073792802106064}{\textit{Int. Math. Res. Not.}} \textbf{2002}
 (2002), 521--545, \href{https://arxiv.org/abs/math-ph/0105014}{arXiv:math-ph/0105014}.

\bibitem{felder2003action}
Felder G., Veselov A.P., Action of {C}oxeter groups on {$m$}-harmonic
 polynomials and {K}nizhnik--{Z}amolodchikov equations, \href{https://doi.org/10.17323/1609-4514-2003-3-4-1269-1291}{\textit{Mosc.
 Math.~J.}} \textbf{3} (2003), 1269--1291, \href{https://arxiv.org/abs/math.QA/0108012}{arXiv:math.QA/0108012}.

\bibitem{moser1975three}
Moser J., Three integrable {H}amiltonian systems connected with isospectral
 deformations, \href{https://doi.org/10.1016/0001-8708(75)90151-6}{\textit{Adv. Math.}} \textbf{16} (1975), 197--220.

\end{thebibliography}
\end{document}